\let\OLDthebibliography\thebibliography
\renewcommand\thebibliography[1]{
	\OLDthebibliography{#1}
	\setlength{\parskip}{0pt}
	\setlength{\itemsep}{0pt plus 0.3ex}
}
\newtheorem{Theorem}{Theorem}
\newtheorem{Claim}{Claim}
\newtheorem{Lemma}{Lemma}
\newtheorem{Proposition}{Proposition}
\newcommand{\R}{\mathbb R}
\newcommand{\N}{\mathbb N}
\newcommand{\Q}{\mathbb Q}
\newcommand{\x}{\mathbf x}
\newcommand{\y}{\mathbf y}
\newcommand{\MR}{\smash{ \xrightarrow[\smash{\raisebox{0.7ex}{\tiny $\mathbf{MR}$}}]{}}}
\newcommand{\MRR}{\smash{ \xrightarrow[\smash{\raisebox{0.7ex}{\tiny $\mathbf{MR}$}}]{r}}}
\newcommand{\nMRR}{\smash{ \not\xrightarrow[\smash{\raisebox{0.7ex}{\tiny $\mathbf{MR}$}}]{r}}}
\begin{document}

\title{Canonical theorems in geometric Ramsey theory}
\author{
Panna Geh\'er\thanks{E\"otv\"os Lor\'and University, Budapest, Hungary. Email:~\href{mailto:geher.panna@ttk.elte.hu}{\tt geher.panna@ttk.elte.hu}.}
	\and
	Arsenii Sagdeev\thanks{KIT, Karlsruhe, Germany and Alfr\'ed R\'enyi Institute of Mathematics, Budapest, Hungary. Email:~\href{mailto:sagdeevarsenii@gmail.com}{\tt sagdeevarsenii@gmail.com}.}
	\and
	G\'eza T\'oth\thanks{Alfr\'ed R\'enyi Institute of Mathematics, Budapest, Hungary. Email:~\href{mailto:geza@renyi.hu}{\tt geza@renyi.hu}.}
}

\maketitle

\begin{abstract}
	In Euclidean Ramsey Theory usually we are looking for monochromatic  configurations in the Euclidean space, whose points are colored with a fixed number of colors. In the canonical version, the number of colors is arbitrary, and we are looking for an `unavoidable' set of colorings of a finite configuration, that is a set of colorings with the property that one of them always appears in any coloring of the space. This set definitely includes the monochromatic and the rainbow colorings. In the present paper, we prove the following two results of this type. First, for any acute triangle $T$, and any coloring of  $\mathbb{R}^3$, there is either a monochromatic or a rainbow copy of $T$. Second, for every $m$, there exists a sufficiently large $n$ such that in any coloring of  $\mathbb{R}^n$, there exists either a monochromatic or a rainbow $m$-dimensional unit hypercube. In the maximum norm, $\ell_{\infty}$, we have a much stronger statement. For every finite $M$, there exits an $n$ such that in any coloring of $\R_\infty^n$, there is either a monochromatic or a rainbow isometric copy of $M$.
\end{abstract}

\textbf{Key words:} Euclidean Ramsey theory, canonical Ramsey theorem, colorings of the space

\textbf{Mathematics Subject Classification:} 05D10, 05C55

\section{Introduction}

A typical problem in \textit{Ramsey theory}~\cite{GRS91} is to show that every $r$-coloring 
of a sufficiently large `domain' contains a monochromatic copy of some fixed `configuration'. 
A classical result in the field is the van der Waerden theorem~\cite{Wae27}, 
which states that for all $k, r \in \N$, there exists $n$ such that every $r$-coloring of $[n]$ 
contains a monochromatic $k$-term arithmetic progression. Each problem in Ramsey theory 
has its \textit{canonical} counterpart, in which the goal is to list a minimal set $S$ of colorings which is  `unavoidable' regardless of the number of colors in use. That is, in any coloring of the domain we have a configuration whose coloring belongs to $S$. A coloring of a configuration is called {\em rainbow} if all of its point are of different color.
For example, the canonical van der Waerden~\cite{EG80} theorem states that for all $k \in \N$,  there exists $n$ such that every coloring of $[n]$ contains either a monochromatic or a \textit{rainbow} $k$-term arithmetic progression. For modern exposition and further generalizations of this 
result, see~\cite{GKP12,Gir20} and the references therein. We note that the minimal sets of unavoidable colorings often contain  \textit{other} colorings along with the monochromatic and the rainbow. In fact, this is the case for canonical versions of both the Ramsey theorem due to Erd\H os and Rado~\cite{ER50}, see also~\cite{JM00,AL05,W06,JJL03}, and the Hales--Jewett theorem~\cite{PV83,NPRV85}, see \Cref{Sec_HJ} for the exact statement.

In their celebrated trilogy~\cite{EGMRSS1, EGMRSS2, EGMRSS3}, Erd\H{o}s, Graham, Montgomery, Rothschild, Spencer and Straus initiated a systematic study of the following general problem in geometric Ramsey theory. For $n,r \in \N$ and $A \subset \R^n$, does every $r$-coloring of $\R^n$ contain a monochromatic isometric copy of $A$? They also introduced the notation 
$\R^n \xrightarrow{r} A$ for an affirmative answer on the latter question. 
Note that only a few results in the field are tight. Even the innocent-looking famous special case, when $n=2$ and 
$A=I$ is a fixed two-point set is open. It is equivalent to the notorious Hadwiger-Nelson problem which asks for the chromatic number of the plane. After 75 years of extensive study, there are still two values of $r$, namely $r=5$ and $r=6$, for which we do not know whether 
$\R^2 \xrightarrow{r} I$ or not, see~\cite{degrey,EI20} and the survey in~\cite{Soifer}. 

Another major open problem in the field is to describe all \textit{Ramsey sets}, 
that is sets $A$ with the property that for all $r \in \N$ there is a sufficiently large $n$ with $\R^n \xrightarrow{r} A$. It is known that every Ramsey set is spherical \cite[Theorem~13]{EGMRSS1} and finite \cite[Theorem~19]{EGMRSS2}. We refer the reader to~\cite{LRW12} for the discussion about if these two necessary conditions might also be sufficient. From the other direction, it is known \cite{Kriz1991,Cant2007} that (the vertex sets of) all regular polytopes are Ramsey. The vertex set $A$ of a box or a simplex in arbitrary dimension has an even stronger \textit{exponentially Ramsey} property: there exists a positive constant $c(A)$ such that for all $r \in \N$ and $n\sim c(A)\log r$, we have $\R^n \xrightarrow{r} A$, see~\cite{FR90}. For the currently best bounds on $c(A)$, we refer the reader to \cite{KSZ23} and the references therein. 

To the best of our knowledge, the study of canonical results in Euclidean Ramsey theory was initiated only recently. 
In~\cite{MOW22} the authors proved that every $r$-coloring of $\R^n$ contains either a monochromatic or a rainbow copy of a fixed rectangle for $n=\Theta(r)$. However, this result is far from  optimal, because $n=\Theta(\log r)$ dimensions are already sufficient to ensure a monochromatic copy of a given rectangle as we mentioned above. In fact, it seems hard to avoid rainbow copies of a given set $A$ whenever the number of colors in use is much larger than the cardinality $|A|$. Thus it feels only natural to expect that for most of the Ramsey sets $A$, if not for all of them, there exists a finite $n$ such that every coloring of $\R^n$ contains either a monochromatic or a rainbow copy of $A$ regardless of the number of colors in use. In the spirit of classic arrow notation, we denote the latter claim by $\R^n \MR A$ for short. Recently Cheng and Xu~\cite{CX23} found the first evidence supporting this intuition. Using our notation, their theorem can be stated as follows:

\begin{Theorem}[Cheng, Xu~\cite{CX23}] \label{th_CX1}
	\phantom{a} \vspace{-1mm}
	\begin{itemize}
		\item For every acute triangle $T$, there exists $n$ such that $\R^n \MR T$.
	\end{itemize}
	\phantom{a} \vspace{-8mm}
	\begin{itemize}
		\item For every right triangle $T$, we have $\R^3 \MR T$.
	\end{itemize}
\end{Theorem}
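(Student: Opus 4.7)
My approach exploits the standard canonical-Ramsey dichotomy: assuming the coloring $\chi \colon \R^n \to \N$ admits no rainbow copy of $T$, local structural constraints will force a monochromatic one.

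For the first statement, fix an acute triangle $T$ with sides $a \leq b \leq c$. For any two points $p, q \in \R^n$ with $|pq| = a$, the completion locus
\[
L_{p,q} \;=\; \{r \in \R^n : |pr| = c,\ |qr| = b\}
\]
is an $(n-2)$-sphere of radius equal to the altitude $h_a$ of $T$, lying in the hyperplane perpendicular to $\overline{pq}$ through the foot of that altitude. Every $r \in L_{p,q}$ together with $p, q$ yields a congruent copy of $T$, so the rainbow-free hypothesis forces $\chi(r) \in \{\chi(p), \chi(q)\}$; in particular, the entire $(n-2)$-sphere $L_{p,q}$ receives at most two colors. A short trigonometric computation gives $h_a/R(T) = 2\sin B \sin C = \cos(B-C) + \cos A$, and since $A$ is the smallest angle one checks $|B-C| < A \leq \pi/3$, whence the ratio exceeds $1$ for any non-degenerate acute triangle. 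Hence $L_{p,q}$ is large enough to contain isometric copies of $T$.

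It therefore suffices to show that for $n$ sufficiently large, every $2$-coloring of an $(n-2)$-sphere of radius $h_a > R(T)$ contains a monochromatic copy of $T$. Since $T$ is spherical and acute, this is a mild extension of the Frankl--R\"odl exponential-Ramsey theorem for simplices: one can either transfer the Euclidean Frankl--R\"odl construction to a large-radius sphere, or invoke Kr\'\i z's theorem on Ramsey sets invariant under transitive group actions. This concludes the acute case.

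The second statement is genuinely harder because the dimension $n = 3$ is pinned, and for right triangles the previous completion locus degenerates to a circle too small to inscribe $T$; one checks $h_c/R \leq 1$ by AM-GM. Instead, I would exploit the Pythagorean relation $c^2 = a^2 + b^2$: fixing a hypotenuse $pq$ of length $c$, the locus of third vertices $r$ with $\angle prq = \pi/2$ is the full Thales $2$-sphere of diameter $pq$ in $\R^3$, which under the rainbow-free hypothesis also carries at most two colors. The plan is to combine several such Thales spheres arising from translated and rotated copies of $T$ sharing vertices, producing a rigid finite configuration on which a careful combinatorial case analysis — using the two-color constraint at each shared vertex — either produces a monochromatic $T$ directly or a chain of forced color equalities that collapses to one. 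The principal obstacle in both parts is this last step of converting the structural information ``every completion locus is $2$-colored'' into an actual monochromatic copy of $T$: it is essentially automatic when $n$ can be made large via Frankl--R\"odl, but requires genuinely geometric input in fixed small dimension.
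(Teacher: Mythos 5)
Your treatment of the \emph{first} bullet is a genuinely different route from anything in the paper, and it is essentially viable. The reduction is sound: the completion locus $L_{p,q}$ over a bichromatic pair at distance $a$ is a $2$-colored $(n-2)$-sphere, and your computation $h_a/R=\cos(B-C)+\cos A>1$ for acute $T$ is correct. Two things need tightening. First, the two-color constraint on $L_{p,q}$ is vacuous unless $\chi(p)\neq\chi(q)$, so you must first produce a bichromatic pair at distance $a$ (easy via a chain of points at mutual distance $a$, as the paper does for distance $c$, but it has to be said). Second, the concluding step is not a ``mild extension'' of Frankl--R\"odl, and K\v{r}\'{\i}\v{z}'s theorem is a statement about Euclidean space, not spheres: what you need is precisely the sphere-Ramsey theorem of Matou\v{s}ek and R\"odl (a simplex of circumradius $R$ is Ramsey on high-dimensional spheres of radius $R+\varepsilon$), which is a nontrivial result that must be cited as such. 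For comparison, the paper does not reprove this bullet; it cites Cheng--Xu and instead proves the stronger Theorem~\ref{theorem_triangle} ($n=3$ suffices) by a completely different mechanism: propagating a bichromatic ``good pair'' through tetrahedra whose four faces are congruent to $T$, showing via a group-theoretic lemma on rotations about two skew lines that the propagation covers all of $\R^3$, and then invoking $\R^3 \xrightarrow{2} T$. Your sphere argument cannot recover $n=3$, but the statement as written does not require it.

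The \emph{second} bullet contains a genuine error followed by a non-proof. You assert that the full Thales $2$-sphere over a bichromatic hypotenuse $pq$ carries at most two colors under the rainbow-free hypothesis. It does not: that hypothesis constrains only those $r$ for which $pqr$ is congruent to $T$, i.e.\ $|pr|=a$ and $|qr|=b$, and this locus is exactly the circle of radius $h_c=ab/c\le c/2$ that you had already correctly discarded as too small; a generic point of the Thales sphere forms a right triangle with the wrong leg lengths, so its color is unconstrained. With that claim removed, the remainder --- ``combine several such Thales spheres \dots a careful combinatorial case analysis \dots collapses to one'' --- specifies no configuration and performs no analysis, and you yourself flag the decisive step as an open obstacle. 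So the right-triangle case is not established. Note also that the paper's own extension mechanism offers no rescue here: the tetrahedron with four congruent faces degenerates to a planar set exactly when $T$ is right, which is why Theorem~\ref{theorem_triangle} is stated only for acute triangles and the right-triangle claim rests entirely on Cheng and Xu's separate argument.
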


In fact, they proved a stronger statement, which also covers simplices of larger dimension and obtuse triangles that are not `too flat'. In the present paper, we refine their technique and strengthen the above result.

\begin{Theorem} \label{theorem_triangle}
    For every acute triangle $T$, we have $\R^3 \MR T$.
\end{Theorem}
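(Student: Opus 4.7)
The plan is a proof by contradiction: assume $\chi : \R^3 \to C$ is a coloring of $\R^3$ admitting neither a monochromatic nor a rainbow copy of $T$, and derive a contradiction. Denote the side lengths of $T$ by $a \le b \le c$.

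The first main ingredient is a \emph{circle lemma}: for any two points $X, Y \in \R^3$ with $|XY|$ equal to a side of $T$ (say $c$) and $\chi(X) \ne \chi(Y)$, the set of third vertices $Z \in \R^3$ completing $\{X, Y, Z\}$ to a copy of $T$ is a union of one or two circles, obtained by rotating any such $Z$ around the line through $X$ and $Y$. The no-rainbow hypothesis then forces every point of these circles to be colored with one of $\chi(X), \chi(Y)$. Analogous statements hold for pairs at distances $a$ or $b$. A crucial feature distinguishing $\R^3$ from $\R^2$ is that these loci are continuous $1$-dimensional manifolds rather than just a pair of points, which yields a much richer family of color constraints that can be chained together.

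The second main step is to iterate the circle lemma to propagate the two-color restriction over a substantial subset of $\R^3$. Starting from any pair of distinctly colored points $X_0, Y_0$ with $|X_0Y_0| = c$, the lemma yields a circle on which $\chi$ takes only values in $\{\alpha, \beta\}$, where $\alpha := \chi(X_0)$ and $\beta := \chi(Y_0)$. Choosing two points on this circle that are still of distinct colors and at one of the admissible distances $a, b, c$, one produces new $\{\alpha, \beta\}$-colored circles transverse to the first. Repeating in the rotational directions available in $\R^3$ should sweep out a two-dimensional $\{\alpha, \beta\}$-colored subset rich enough that a standard two-color Euclidean Ramsey result for acute triangles furnishes a monochromatic copy of $T$ inside it, contradicting the starting assumption.

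The principal obstacle is guaranteeing that the propagation is nontrivial at every step: at each stage one must exhibit, inside the accumulated two-colored set, a pair of points at one of the admissible distances $a, b, c$ that still carry \emph{distinct} colors, since otherwise the circle lemma becomes vacuous and the iteration stalls (and in fact the presence of many monochromatic pairs at side-distances is itself dangerous and needs to be handled directly). Overcoming this is expected to require a careful case analysis exploiting the acuteness of $T$---so that the completion circles always intersect the previously constructed colored region in the required way---together with the full rotational symmetry of $\R^3$ about arbitrary axes. This is precisely the point at which Cheng and Xu's original argument must be refined: whereas they permitted themselves additional ambient dimensions to unblock the iteration, here every propagation step must be forced to produce useful colored geometry within $\R^3$ itself.
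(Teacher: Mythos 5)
Your overall strategy---show that a large region is forced to use only two colors, then invoke a two-color Euclidean Ramsey theorem for triangles---is indeed the strategy of the paper. But the proposal leaves open precisely the step on which the whole proof turns, and it also underestimates what the final step requires.

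First, the propagation. You correctly identify as the ``principal obstacle'' the need to exhibit, at every stage, a pair of points at an admissible distance that still carry \emph{distinct} colors; without this the iteration stalls. You do not resolve this, and resolving it is the main idea of the actual proof. The paper's solution is to propagate not single circles but \emph{good pairs}: call $(X,Y)$ good if $|XY|=c$ and $X,Y$ have different colors, and call $(U,V)$ an extension of $(X,Y)$ if $XYUV$ is a simplex all four of whose faces are congruent to $T$. Then the triangles $UXY$ and $VXY$ force $U,V$ to be colored with the two colors of $X,Y$ (no rainbow), and the triangles $UVX$ and $UVY$ force $U$ and $V$ to have \emph{different} colors (no monochromatic, no rainbow). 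So an extension of a good pair is automatically a good pair---the distinct-color property is propagated for free, with no case analysis. A separate geometric argument (using acuteness: the relevant circle $C_2$ contains points both nearer and farther from a fixed $C\in C_1$ than $|AB|$, hence by continuity a point at distance exactly $c$) shows every good pair has an extension. Your ``circle lemma'' only yields that the completion loci are $2$-colored; it does not by itself produce new distinctly-colored pairs, which is the whole difficulty.

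Second, the endgame. You propose to sweep out a ``two-dimensional'' $2$-colored set and then apply ``a standard two-color Euclidean Ramsey result for acute triangles'' inside it. No such result is available for a $2$-dimensional set: the theorem the paper uses is $\R^3 \xrightarrow{2} T$ (Theorem 8 of Erd\H{o}s et al.), which needs all of $\R^3$; indeed $\R^2 \xrightarrow{2} \triangle$ fails for the equilateral triangle. So you must show the \emph{entire} space is $2$-colored. The paper does this with a nontrivial group-theoretic lemma: a subset of $\R^3$ invariant under rotations about two skew lines is empty or all of $\R^3$ (because those rotations generate the full group of orientation-preserving isometries). The closure of one good pair under extensions is invariant under rotations about two skew lines, hence equals $\R^3$. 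Both of these ingredients---the simplex extension trick and the two-skew-lines rotation lemma---are absent from your proposal, so as written it is not a proof.
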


Another canonical result obtained by Cheng and Xu in~\cite{CX23} covers the case of a square $I^2$.

\begin{Theorem}[Cheng, Xu~\cite{CX23}] \label{I2}
There exists $n$ such that $\R^{n} \MR I^2$.
\end{Theorem}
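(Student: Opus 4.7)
The plan is to reduce the Euclidean statement to a combinatorial question on the Boolean hypercube, to which the canonical Hales--Jewett theorem of Pr\"omel and Voigt applies. Work in $\R^{n}$ for $n$ sufficiently large, with orthonormal basis $e_1, \dots, e_{n-2}, f_1, f_2$; for each $v \in \{0,1\}^{n-2}$, consider the axis-aligned unit square $Q(v) = \{v + \tau_1 f_1 + \tau_2 f_2 : (\tau_1, \tau_2) \in \{0,1\}^2\}$. Each vertex position $\tau \in \{0,1\}^2$ induces a slice coloring $\chi_\tau(v) := \chi(v + \tau_1 f_1 + \tau_2 f_2)$ of $\{0,1\}^{n-2}$ with arbitrary colors; any monochromatic or rainbow $Q(v)$ yields the desired isometric copy of $I^2$.

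Applying canonical Hales--Jewett iteratively to the four slice colorings gives a combinatorial $m$-subspace $V \subseteq \{0,1\}^{n-2}$ with wild groups $W_1, \dots, W_m$ on which every $\chi_\tau|_V$ is canonical: $\chi_\tau(v) = g_\tau(v|_{S_\tau})$ for some $S_\tau \subseteq [m]$ and injective $g_\tau$. A case analysis on $(S_\tau)_\tau$ for $m \geq 3$ dispatches most configurations. For example, if the $S_\tau$'s jointly separate enough wild positions, we can pick $v \in V$ so that the four projections $v|_{S_\tau}$ give four distinct values, forcing $Q(v)$ to be rainbow; and if $V$ has two singleton wild groups and some $S_\tau = \emptyset$, the 2-face of $V$ spanned by those singletons at the fixed $\tau$ is a monochromatic unit square in $\R^n$.

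The main obstacle is the degenerate subcase in which every $S_\tau = \emptyset$ (so each $\chi_\tau|_V \equiv c_\tau$ is constant), the pattern $(c_{00}, c_{10}, c_{01}, c_{11})$ is neither monochromatic nor rainbow (for instance, the diagonal $2{+}2$ pattern $(a,b,b,a)$), and $V$ contains at most one singleton wild group. The axis-aligned unit squares available in $V \times \{0,1\}^2$ are then only the ``bad'' $Q(v)$'s and, at best, ``cross'' squares $\{(v, \tau), (v + e_p, \tau), (v, \tau'), (v + e_p, \tau')\}$ with $|\tau - \tau'| = 1$, whose pattern $(c_\tau, c_\tau, c_{\tau'}, c_{\tau'})$ is also not canonical. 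To overcome this, I would pre-reserve a pair of coordinates as forced singleton wild positions before applying canonical Hales--Jewett: for each of the sixteen choices $(r, \tau) \in \{0,1\}^2 \times \{0,1\}^2$, apply canonical Hales--Jewett to $\chi_{r, \tau}(v) := \chi(r, v, \tau)$ to obtain a common canonical subspace $V' \subseteq \{0,1\}^{n-4}$. In the structured set $\{0,1\}^2 \times V' \times \{0,1\}^2 \subset \R^n$, the two reserved singletons always provide an axis-aligned unit square at every fixed $(v, \tau)$; closing the proof then reduces to a final combinatorial case analysis on the canonical data $(g_{r, \tau}, S_{r, \tau})$, which is the technical heart of the argument.
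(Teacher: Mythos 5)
There is a fatal gap: your strategy only ever searches for \emph{axis-parallel unit squares inside the Boolean cube} $\{0,1\}^n$ --- every square you consider (the $Q(v)$'s, the ``cross'' squares, and the squares supplied by reserved or singleton wild coordinates) is a $2$-face of $\{0,1\}^n$ --- and no such argument can close, because the parity $2$-coloring $\chi(x_1,\dots,x_n) = x_1+\dots+x_n \bmod 2$ of $\{0,1\}^n$ contains no monochromatic and no rainbow $2$-face whatsoever: every $2$-face receives the pattern $(a,b,b,a)$. This is exactly the degenerate diagonal pattern you single out, but it is not a rare subcase to be dispatched by a cleverer case analysis; it is a genuine counterexample to the combinatorial statement your reduction needs. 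In particular your proposed fix fails: reserving two (or any number of) coordinates as forced singletons still produces only $2$-faces of a Boolean cube, and under the parity coloring the sixteen constants $c_{r,\tau}$ form precisely the parity pattern on $\{0,1\}^4$, so every candidate square is again of type $(a,b,b,a)$. (Note also that canonical Hales--Jewett over the alphabet $[2]$ cannot hand you singleton wild groups on demand: for the parity coloring, constancy of $\chi_\tau$ on a combinatorial subspace forces every wild group to have even size.) The parity coloring is of course only a coloring of the lattice rather than of $\R^n$, but your argument never inspects any non-lattice point or any tilted square, so it cannot tell the two situations apart.

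The way around this obstruction --- and this is what the present paper does in proving the more general Theorem~\ref{cube} (the statement you were asked about is itself only cited from Cheng--Xu and not reproved here) --- is to hunt for squares of side length $\sqrt{2^{k}}$ rather than $1$: one works inside Cartesian products of regular simplices $S_d(2)$ of side $\sqrt{2}$, i.e.\ pairs of lattice points at Hamming distance $2$, so that all vertex-to-vertex Hamming distances of the target square are even and the parity obstruction vanishes; the unit square is then recovered by rescaling the coloring. The combinatorial engine there is a probabilistic argument combined with a pigeonhole product/layer argument, not canonical Hales--Jewett. If you wish to salvage a Hales--Jewett-style proof you must at least change the embedding of the target square so that its edges are sums of an even number of basis vectors; as written, your approach cannot succeed.
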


Their method probably can be generalized to a cube $I^3$ but perhaps not further, because for $m\ge 4$, the diameter of the (vertex set of an) $m$-dimensional hypercube $I^m$ becomes too large in comparison with its sidelength, which seems to be an obstacle. Here we present a completely different approach to bypass this limitation. 

\begin{Theorem} \label{cube}
For every $m \in \N$, there exists $n$ such that $\R^{n} \MR I^m$.
\end{Theorem}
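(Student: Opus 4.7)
My proof proposal is to reduce the geometric statement to a combinatorial question on the Boolean cube and then apply the canonical Hales--Jewett theorem. View $\{0,1\}^n \subset \mathbb{R}^n$; any combinatorial $m$-subspace of $\{0,1\}^n$---the set of $x \in \{0,1\}^n$ obtained by fixing $n-m$ coordinates in $\{0,1\}$ and letting the remaining $m$ vary freely---is an isometric copy of $I^m$. Consequently, it suffices to show that for every $m$ there exists $n$ such that every coloring of $\{0,1\}^n$ (with arbitrarily many colors) contains a combinatorial $m$-subspace that is either monochromatic or rainbow.

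The key tool is the canonical version of the Hales--Jewett theorem due to Pr\"omel and Voigt. For the binary alphabet, it asserts that for every $k$ there exists $n$ such that any coloring $c$ of $\{0,1\}^n$ admits a combinatorial $k$-subspace $V \cong \{0,1\}^k$ together with a subset $S \subseteq [k]$ satisfying $c(v) = c(w)$ iff $v$ and $w$ agree on all coordinates in $S$. In other words, the color on $V$ depends canonically on the projection to the ``essential'' coordinates $S$; this is the only form a canonical equivalence can take for $t=2$, since the only equivalence relations on $\{0,1\}$ are the identity (coordinate matters) and the total one (coordinate is ignored). I would apply this theorem with $k = 2m$.

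Given such $V$ and $S$, the proof splits on the size of $S$. If $|S| \le m$, then $|[2m]\setminus S| \ge m$: fixing the $S$-coordinates to arbitrary values and letting $m$ of the remaining coordinates vary produces a combinatorial $m$-subcube of $V$ on which $c$ is constant---a monochromatic copy of $I^m$. If $|S| > m$, I instead fix the non-$S$-coordinates and let $m$ of the $S$-coordinates vary; then the resulting $m$-subcube has $2^m$ points with pairwise distinct $S$-projections, and hence pairwise distinct colors---a rainbow copy of $I^m$. Either way, we have produced an isometric copy of $I^m$ in $\mathbb{R}^n$ that is monochromatic or rainbow.

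The main challenge is invoking (or verifying from scratch) the canonical Hales--Jewett theorem in the precise binary-alphabet form stated above---in particular, ensuring that exotic patterns such as XOR-type equivalences $c(v) = v_{i_1} \oplus v_{i_2}$ do not arise as canonical forms, or else can be further reduced to subset-dependent ones by passing to a subsubspace. Once this combinatorial backbone is in place, the rest of the argument is a simple pigeonhole on $|S|$; crucially, the entire proof is purely symbolic and never refers to the diameter or other metric parameters of $I^m$, which is precisely why it circumvents the obstruction that forced the Cheng--Xu method to break down at $m \ge 4$.
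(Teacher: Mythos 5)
Your reduction to the Boolean cube breaks at the very first step, and the combinatorial statement you reduce to is in fact false. The objects produced by the (canonical) Hales--Jewett theorem are combinatorial subspaces in which each variable symbol may occupy \emph{many} coordinates; if the $j$-th variable has multiplicity $w_j$, the corresponding subset of $\{0,1\}^n$ is, in the Euclidean metric, a rectangular box with side lengths $\sqrt{w_1},\dots,\sqrt{w_m}$, not a unit cube. The ``each variable appears exactly once'' subspaces you describe (axis-parallel $m$-faces of the cube) are exactly the isometric copies of $I^m$ inside $\{0,1\}^n$, but no Hales--Jewett-type theorem can deliver those: the parity $2$-coloring $c(\x)=\sum_i x_i \bmod 2$ of $\{0,1\}^n$ makes every axis-parallel unit square (and every higher face) bichromatic, hence neither monochromatic nor rainbow for any $m\ge 2$ and any $n$. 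So the claim ``every coloring of $\{0,1\}^n$ contains an axis-parallel $m$-subcube that is monochromatic or rainbow'' is false even with two colors, and no strengthening of the canonical Hales--Jewett theorem can rescue it. Nor can you absorb the multiplicities by rescaling: the side lengths $\sqrt{w_j}$ are determined only after the coloring is revealed, whereas the theorem demands a copy of the fixed set $I^m$.

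This distance distortion under repeated variables is precisely the reason the paper uses the canonical Hales--Jewett theorem only for the max-norm result (\Cref{maxnorm1}), where $\|\cdot\|_\infty$ is insensitive to multiplicities, and proves \Cref{cube} by a completely different argument: an induction on $k$ for the asymmetric statement $I^n(1)\MR (I^k(2^k), I^m(2^k))$, with a probabilistic base case carried out inside Cartesian powers of regular simplices $S_d^m(2)$ and an induction step combining a product Ramsey statement (\Cref{Prop3}) with a layered pigeonhole. Note also that the paper does not look for \emph{unit} cubes inside the unit grid at all (which, as the parity coloring shows, would be hopeless); it finds a monochromatic or rainbow copy of the much larger cube $I^m(2^m)$, of side $2^{m/2}$, built from diagonals of subcubes, and only then rescales the whole space. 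If you want to salvage a Hales--Jewett-flavored proof, you must either control the variable multiplicities (impossible in general) or change the target configuration so that its congruence class is invariant under the distortions a combinatorial subspace introduces; the paper's simplex-product machinery is one way of doing the latter.
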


Euclidean Ramsey theory questions (along with the corresponding notation) can be translated into non-Euclidean norms in a straightforward manner, with the only difference being the definition of an isometric copy\footnote{For a norm $N$ on $\R^n$, a set $A' \subset \R^n$ is an \textit{$N$-isometric copy} of $A \subset \R^n$ if there exists a bijection $f: A \to A'$ such that $\|\x-\y\|_N = \|f(\x)-f(\y)\|_N$ for all $\x,\y \in A$.}, see~\cite{EFI21,Dav23,FGST23,Geher23,KirS23,Kup11,Rai04,Vor23}. The case of an $n$-dimensional space $\R_\infty^n$ equipped with the max-norm\footnote{Recall that for $\x = (x_1,\dots,x_n) \in \R^n$, its \textit{max-norm} is defined by $\|\x\|_\infty = \max_{i}|x_i|$} seems to stand out here due to the following simple classification of finite (exponentially) Ramsey sets. It is not hard to deduce from the Hales--Jewett theorem that for every $r \in \N$ and a finite $M \subset \R^d$, there exists $n$ such that $\R_\infty^n \xrightarrow{r} M$. In fact, the latter claim holds already for $n=\Theta( \log r)$ as it was shown in~\cite{KupS21,FKS}. In the present paper, we deduce the following general canonical result in max-norm Ramsey theory from the canonical Hales--Jewett theorem.

\begin{Theorem} \label{maxnorm1}
For every finite $M \subset \R^d$, there exists $n$ such that $\R_\infty^n \MR M$.
\end{Theorem}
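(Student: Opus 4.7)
The approach is to derive this from the canonical Hales--Jewett theorem of Pr\"omel--Voigt~\cite{PV83}, combined with the elementary fact that any $d_0$-dimensional combinatorial subspace of $A^N$, for a finite alphabet $A\subseteq \R$, is $\ell_\infty$-isometric to $A^{d_0}$ when viewed inside $\R_\infty^N$ (the coordinates sharing a wildcard move in lockstep and hence contribute the same summand to every max-norm difference). Recall that the Pr\"omel--Voigt theorem states: for every finite $A$ and $d_0\in\N$, there exists $N$ such that every coloring of $A^N$ contains a $d_0$-dimensional combinatorial subspace $U$ on which the coloring is canonical, meaning that after identifying $U$ with $A^{d_0}$ via its natural parameterization there is a subset $I\subseteq[d_0]$ with $\chi(x)=\chi(y)$ iff $x|_I=y|_I$.

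Fix $M=\{p_1,\dots,p_m\}\subseteq\R^d$ and set $\delta=\frac{1}{m}\min_{i\ne j}\|p_i-p_j\|_\infty>0$. Define $c_i=(i-1)\delta$ and construct the augmented set $M^*=\{(p_i,c_i):1\le i\le m\}\subseteq\R^{d+1}$. Since $|c_i-c_j|=|i-j|\delta\le(m-1)\delta<\|p_i-p_j\|_\infty$ whenever $i\ne j$, the max-norm distance satisfies $\|(p_i,c_i)-(p_j,c_j)\|_\infty=\|p_i-p_j\|_\infty$, so $M^*$ is an $\ell_\infty$-isometric copy of $M$; crucially, its last coordinate takes $m$ pairwise distinct values. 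Let $A=\{p_{i,\ell}:i\in[m],\ell\in[d]\}\cup\{c_i:i\in[m]\}\subset\R$ be the finite set of all coordinate values occurring in $M^*$, so that $M^*\subseteq A^{d+1}$.

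Set $d_0=d+1$ and let $n=N$ be provided by Pr\"omel--Voigt for alphabet $A$ and dimension $d_0$. Given any coloring of $\R_\infty^n$, restrict it to $A^n$ and apply the theorem to obtain a $d_0$-dimensional combinatorial subspace $U\cong A^{d_0}$ on which the coloring is canonical with defining set $I\subseteq[d_0]$. If $I=\emptyset$, the whole of $U$ is a single color class, so the image of $M^*$ inside $U$ is a monochromatic isometric copy of $M$. If $I\ne\emptyset$, pick any $j\in I$ and a coordinate permutation $\sigma$ of $[d_0]$ with $\sigma(d+1)=j$. Coordinate permutations are $\ell_\infty$-isometries, so $\sigma(M^*)\subseteq A^{d_0}$ is still an isometric copy of $M$, and its $j$-th coordinate takes the $m$ pairwise distinct values $c_1,\dots,c_m$; consequently the $I$-projections of the $m$ points of $\sigma(M^*)$ are pairwise distinct, so they receive $m$ different colors, producing the desired rainbow copy.

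The crux of the argument lies in identifying the right canonical form on the discrete cube (every canonical equivalence on $A^{d_0}$ is of the shape $x\sim y\Leftrightarrow x|_I=y|_I$) and in engineering the augmented isometric copy $M^*$ so that for every non-empty $I$ a coordinate-permutation symmetry of $\R_\infty^{d_0}$ can place its rainbow-distinguishing coordinate inside $I$, thereby converting the intermediate cases of the Pr\"omel--Voigt dichotomy automatically into rainbow copies. The use of an arbitrary finite alphabet $A\subseteq\R$ together with the scale parameter $\delta$ bypasses any assumption on rationality of the coordinates of $M$.
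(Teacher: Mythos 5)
There is a genuine gap: you have misstated the canonical Hales--Jewett theorem, and the error is fatal to the case analysis. The Pr\"omel--Voigt canonical form for colorings of $A^N$ is \emph{not} ``there is a set $I$ of wildcard positions such that $\chi(x)=\chi(y)$ iff $x|_I=y|_I$'' --- that is the shape of the Erd\H os--Rado canonical Ramsey theorem for hypergraphs. The correct statement (Theorem C.7 in \cite{PV83}, stated as \Cref{theorem_CHJ} above) is that the induced coloring on the subspace equals $c_\sim$ for some equivalence relation $\sim$ on the \emph{alphabet}, applied uniformly to every coordinate: $\chi(x)=\chi(y)$ iff $x_\ell\sim y_\ell$ for \emph{all} $\ell$. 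These families are unavoidable and are not contained in your family: e.g.\ for $A=\{1,2,3,4\}$, coloring each point by its vector of coordinate parities induces the same parity coloring on every combinatorial subspace, and this coloring is not of the form ``agree on a subset $I$ of coordinates'' for any $I$. So your dichotomy ``$I=\varnothing$ versus $I\neq\varnothing$'' does not exhaust the cases that actually occur. Concretely, in your rainbow step, placing the distinguishing coordinate $c_1<\dots<c_m$ into a position of $I$ only guarantees that the $m$ points \emph{differ} there; under the true canonical form they get different colors only if their values are pairwise \emph{non-equivalent} under $\sim$, and nothing prevents $\sim$ from identifying some (or all) of the $c_i$ while still being a non-trivial relation elsewhere on $A$.

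Repairing this requires exactly the two ingredients the paper adds. First, when $\sim$ is non-trivial you are only handed \emph{some} non-equivalent pair in the alphabet, and if its two values are far apart in $\R$ you cannot use them as a tag without distorting $\ell_\infty$-distances; the paper therefore refines the alphabet $A$ to a set $B$ whose consecutive gaps are at most the minimum gap of $A$ (inequality \eqref{eq_small}), which forces some \emph{consecutive}, hence metrically harmless, pair $b_{i'},b_{i'+1}$ to be non-equivalent. Second, a single extra coordinate over a two-element non-equivalent pair can only separate $2$ classes, not $|M|$ points; the paper uses $\lceil d\log_2 s\rceil$ extra wildcard coordinates and an injection $I^d\to\{i',i'+1\}^{m-d}$ to tag all $s^d$ points with pairwise non-equivalent patterns. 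Your augmentation $M^*$ and the observation that wildcard multiplicities do not affect $\ell_\infty$-distances are both fine (and the latter is also used in the paper), but without the alphabet refinement and the multi-coordinate binary tagging the argument does not go through.
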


\noindent \textbf{Paper outline.}
In \Cref{Sec2}, we prove Theorems~\ref{theorem_triangle} and~\ref{cube}. In \Cref{Sec3}, we formally state the canonical Hales--Jewett theorem and use it to prove \Cref{maxnorm1}. Finally, in \Cref{Sec4}, we make some further comments
and state more open problems.

\section{Euclidean norm} \label{Sec2}

\subsection{Triangles -- proof of Theorem~\ref{theorem_triangle}}

Let $T$ be an acute triangle with sides $a$, $b$, $c$. 
Consider an arbitrary coloring of $\R^3$ and assume for a  contradiction that there is no  monochromatic or rainbow copy of $T$. Then some two points, which we denote by $Y_1$ and $Y_2$, are of different colors. Take a sequence $Y_1=X_0, X_1, \ldots, X_k=Y_2$ such that consecutive points are at distance $c$.  Then, there are two consecutive points, $X_i=A$ and $X_{i+1}=B$, of different colors, say, $A$ is red and $B$ is blue. 

We say that a pair of points $(X, Y)$ is a {\em good pair} if they are at distance $c$, 
one of them is red while the other one is blue. In particular, $(A, B)$ is a good pair. 
Let $X$, $Y$, $U$, $V$ be points such that $|XY|=|UV|=c$, $|XU|=|YV|=b$, $|XV|=|YU|=a$. That is, $XYUV$ is a simplex, all of whose faces are congruent to $T$. Then we say that the pair $(U,V)$ is an {\em extension} of the pair $(X, Y)$.

\begin{Claim}\label{goodpair}
	If $(X, Y)$ is a good pair, and $(U, V)$ is its extension, then $(U, V)$ is also a good pair.
\end{Claim}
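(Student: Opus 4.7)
The statement asks us to show that the goodness property propagates along extensions. Since the simplex $XYUV$ has all four of its triangular faces congruent to $T$, the natural approach is to read off constraints on the colors of $U$ and $V$ from each of the four face-copies of $T$, using both the ``no monochromatic copy'' and the ``no rainbow copy'' assumption in turn. So the plan is simply to work through these four triangles and see that exactly one red/blue distribution on $\{U,V\}$ survives.

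Concretely, assume without loss of generality that $X$ is red and $Y$ is blue. First I would look at the triangle $XYU$: since this is a copy of $T$, it cannot be rainbow, and because it already contains a red and a blue vertex, $U$ must be colored either red or blue. The same argument applied to $XYV$ forces $V \in \{\text{red},\text{blue}\}$ as well. Next I would rule out that $U$ and $V$ receive the same color. If both were red, then the face $XUV$ would be an all-red copy of $T$, which is forbidden; if both were blue, then the face $YUV$ would be an all-blue copy of $T$, again forbidden. Hence $\{U,V\}$ contains exactly one red and exactly one blue point, which together with $|UV|=c$ means that $(U,V)$ is a good pair.

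\textbf{Main (minor) obstacle.} The argument above is purely combinatorial once the configuration $XYUV$ actually exists in $\R^3$. So the one thing to verify is that, for any acute triangle $T$ with sides $a,b,c$, there really is a tetrahedron in $\R^3$ with the prescribed edge lengths $|XY|=|UV|=c$, $|XU|=|YV|=b$, $|XV|=|YU|=a$. This is an isosceles (equifacial) tetrahedron, and the classical existence criterion is precisely that its face triangle is acute, so our acuteness hypothesis on $T$ is exactly what makes the construction live in $\R^3$. This is also the reason the ambient dimension in the theorem can be kept at $3$: one does not have to move to higher dimensions to realize the extension.
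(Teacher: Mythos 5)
Your argument is correct and is essentially identical to the paper's proof: both use the faces $XYU$ and $XYV$ (no rainbow copy) to force $U,V\in\{\text{red},\text{blue}\}$, and then the faces $XUV$ and $YUV$ (no monochromatic copy) to force opposite colors. The existence of the equifacial tetrahedron that you flag as the remaining obstacle is not actually part of this claim (which assumes the extension is given); the paper addresses it separately in the next claim, ``Every good pair has an extension.''
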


\begin{proof}
	Suppose without loss of generality that $X$ is red and $Y$ is blue. Since both $UXY$ and $VXY$ are congruent to $T$, both $U$ and $V$ are red or blue. But $UVX$ and $UVY$ are also congruent to $T$, so one of $U, V$ is red, and the other one is blue.
\end{proof}

\begin{Claim}
   Every good pair has an extension.
\end{Claim}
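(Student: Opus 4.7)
The plan is to convert the existence of the extension into an explicit two-circle feasibility problem in $\R^3$ and then settle it using the acuteness of $T$. I would place $X=(0,0,0)$ and $Y=(c,0,0)$, and observe that any candidate $U$ with $|XU|=b$ and $|YU|=a$ is forced to lie on a circle $C_U$ contained in the plane $\{x=x_U\}$ perpendicular to $XY$, where $x_U=(c^2+b^2-a^2)/(2c)$. The key observation is that the radius of $C_U$ equals the altitude $h$ of $T$ onto its side of length $c$, since that altitude is precisely the distance from $U$ to the line $XY$ in any triangle that is congruent to $T$ along the edge $XY$. Symmetrically, $V$ is confined to a circle $C_V$ in the parallel plane $\{x=x_V\}$ with $x_V=(c^2+a^2-b^2)/(2c)$ and the same radius $h$.

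Parametrizing $U$ and $V$ by angles $\alpha,\beta$ on $C_U,C_V$ respectively, I would expand
\[
|UV|^2 \;=\; \Big(\frac{b^2-a^2}{c}\Big)^2 + 2h^2\bigl(1-\cos(\alpha-\beta)\bigr),
\]
so as $\alpha-\beta$ ranges over $\R$, the quantity $|UV|^2$ sweeps the interval $[\delta^2,\,\delta^2+4h^2]$ with $\delta:=|b^2-a^2|/c$. Hence an extension $(U,V)$ exists in $\R^3$ if and only if $c^2$ lies in this interval.

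The last step, which is where acuteness actually enters, is to verify both endpoint inequalities. The upper bound $c^2\le \delta^2+4h^2$ rearranges, using $4h^2c^2=16\operatorname{Area}(T)^2$ together with Heron's formula, to $c^2\le a^2+b^2$: the angle of $T$ opposite the side of length $c$ is non-obtuse. The lower bound $c^2\ge \delta^2$ rearranges to $c^2\ge|a^2-b^2|$, which is equivalent to the remaining two angles of $T$ being non-obtuse. Since $T$ is acute, all three inequalities hold strictly, so suitable angles $\alpha,\beta$ exist, and the extension is realized by honest points $U,V\in\R^3$. I expect the only genuinely geometric insight to be the observation that $C_U$ and $C_V$ share the radius $h$; the rest is a routine manipulation of standard triangle identities, and one sees directly that obtuseness would be the obstruction.
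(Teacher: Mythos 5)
Your proposal is correct and follows essentially the same route as the paper: both arguments confine $U$ and $V$ to the two coaxial circles perpendicular to $XY$ and then invoke the intermediate value theorem after checking that the minimum distance between the circles is below $c$ and the maximum is above $c$. The only difference is that you verify these two endpoint inequalities by coordinates and Heron's formula (reducing them to $c^2\le a^2+b^2$ and $c^2\ge|a^2-b^2|$), whereas the paper does so synthetically via projections onto the segment $AB$ and the reflection through its midpoint; both verifications use acuteness in exactly the same two places.
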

 
\begin{proof}
	
Let $(A,B)$ be a good pair. Take a ball of radius $b$ (resp.\ $a$) about $A$ (resp.\ $B$). Their intersection is the circle $C_1$, its plane is perpendicular to $AB$, and its center is on the segment $AB$ since $T$ is an acute triangle. Similarly, take a ball of radius $b$ (resp.\ $a$) about $B$ (resp.\ $A$). Their intersection is the circle $C_2$, its plane is perpendicular to $AB$, and its center is on the segment $AB$. Fix an arbitrary $C \in C_1$, and let $X$ (resp.\ $Y$) be the point of $C_2$ closest (resp.\ farthest)
from $C$. 
We have $|CX|<|AB|$. Indeed, if $a=b$, then it is trivial because $C=X$. Otherwise, it follows from the fact that $CX$ and $AB$ are parallel and the orthogonal projections of $X$ and $C$ to the line $AB$ are inside the segment $AB$, see \Cref{fig_extension}. The point $Y$ is the reflection of $C$ through the midpoint $O$ of the segment $AB$. 
Since $\measuredangle ACB=\measuredangle AYB< 90^{\circ}$, both
$C$ and $Y$ are outside the ball of center $O$ and radius $|AB|/2$. Consequently $|CY|>|AB|$. 
Therefore, there is a point $D$ on $C_2$ with $|CD|=|AB|$. The pair $(C,D)$ is the desired extension.
\end{proof}
    
\begin{figure}[htp]
\centering
\includegraphics[width=12cm]{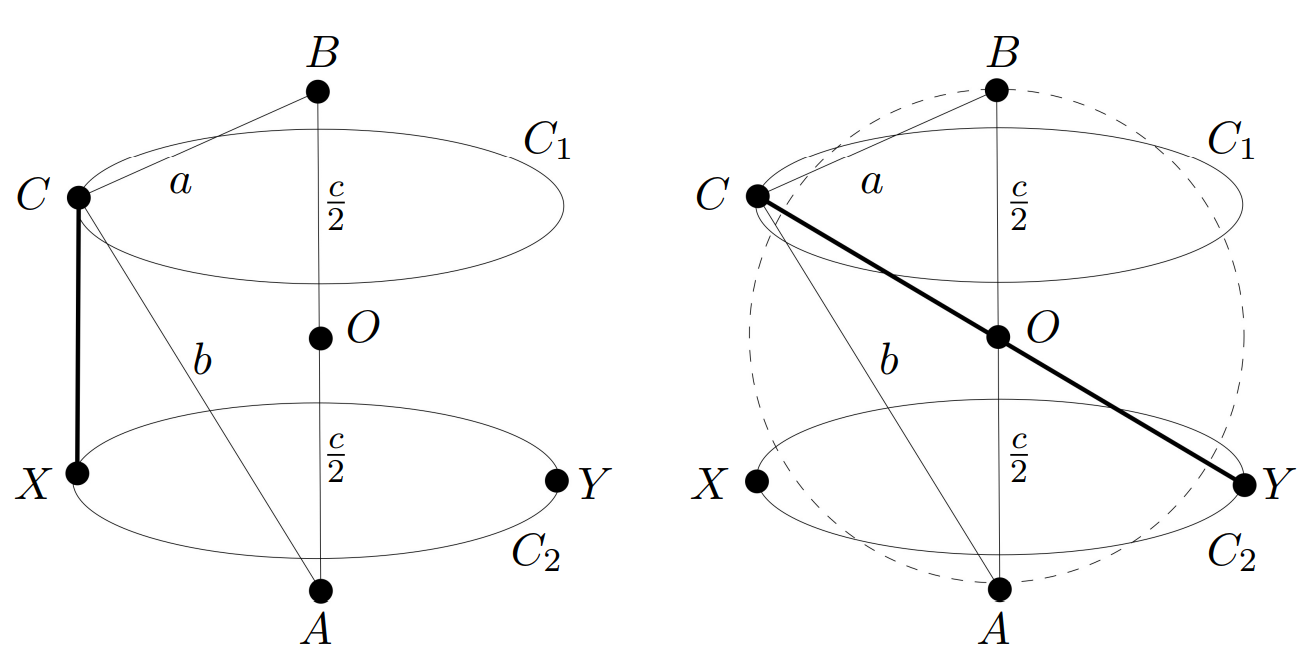}
\caption{Segment $AB$ is longer than $CX$ and shorter than $CY$.}
\label{fig_extension}
\end{figure}

\begin{Lemma}\label{rotations}
Let $l_1$ and $l_2$ be two skew lines in $\R^3$.
Suppose that the set  $S\subset \R^3$ is invariant under rotations about $l_1$ and $l_2$. 
Then $S=\varnothing$ or $S=\R^3$.
\end{Lemma}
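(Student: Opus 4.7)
The plan is to show that for every $p\in\R^3$ the orbit of $p$ under the group $G$ generated by all rotations about $l_1$ and $l_2$ (for all angles) equals the whole $\R^3$. Since $S$ is $G$-invariant by hypothesis, this immediately gives $S=\varnothing$ or $S=\R^3$.

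I would first place coordinates so that the common perpendicular of $l_1$ and $l_2$ is the $z$-axis: $l_1$ lies in the horizontal plane at height $-h$ with direction $\hat u_1$, and $l_2$ lies at height $+h$ with direction $\hat u_2$, where $h>0$ and $\hat u_1,\hat u_2$ are unit vectors in the $xy$-plane enclosing an angle $\not\equiv 0\pmod{\pi}$; both facts express that $l_1$ and $l_2$ are skew. I would then introduce the smooth vector fields $X_1(p)=\hat u_1\times(p-q_1)$ and $X_2(p)=\hat u_2\times(p-q_2)$ (for any chosen $q_i\in l_i$) on $\R^3$, whose flows are exactly the two families of rotations.

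The heart of the argument is to verify that the iterated Lie brackets of $X_1$ and $X_2$ span the tangent space $T_p\R^3\cong\R^3$ at every point $p\in\R^3$. At generic $p$ the three vectors $X_1(p),X_2(p),[X_1,X_2](p)$ already suffice: their determinant is an explicit polynomial in the coordinates of $p$, nonzero outside a lower-dimensional algebraic subset. At the exceptional points where this polynomial vanishes, I would compute a higher-order bracket such as $[X_1,[X_2,[X_1,X_2]]]$, which turns out by direct calculation to be a nonzero constant vector field (the infinitesimal generator of a pure translation). Crucially, the skewness hypothesis on $l_1,l_2$ is precisely what forces this translation to be nontrivial, and its direction together with the previously obtained vectors supplies a spanning set of $T_p\R^3$ at the exceptional $p$ as well.

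Once the bracket-generating condition is established everywhere on $\R^3$, the Orbit Theorem of Sussmann (equivalently, the Chow--Rashevskii theorem) implies that each orbit of $G$ is a smooth immersed submanifold of $\R^3$ whose tangent space at every point is all of $\R^3$; hence each orbit is open. Because $\R^3$ is connected and is partitioned into pairwise disjoint open orbits, there can be only one orbit, namely $\R^3$ itself, and the lemma follows. The main obstacle I expect is the bracket-generating check at the exceptional points where the first three brackets degenerate; it is exactly there that both parts of the skew hypothesis (non-parallelism and non-intersection) are indispensable, the former ensuring $[X_1,X_2]$ is not merely a multiple of $X_1$ or $X_2$, and the latter producing the genuine translation via the triple bracket.
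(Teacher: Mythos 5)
Your strategy is sound and genuinely different from the paper's. The paper also reduces the lemma to transitivity of the group $G$ generated by the two rotation families, but proves it synthetically: rotating $l_1$ about $l_2$ and vice versa produces two intersecting hyperboloids, hence rotated copies $l_1'$, $l_2'$ meeting at a point $P$; a spherical-geometry argument then shows $\langle \mbox{R}(l_1'),\mbox{R}(l_2')\rangle=\mbox{SO}(P)$, and compositions of rotations about distinct parallel axes yield all translations, giving $G=\mbox{E}^+$. You instead pass to the infinitesimal level, show that the vector fields $X_1,X_2$ are bracket-generating, and invoke the Orbit Theorem/Chow--Rashevskii. Your route is shorter and makes the connectivity step automatic, at the cost of importing a substantial black box; the paper's argument is elementary and self-contained (modulo a classical fact about rotations of the sphere) and yields the stronger conclusion $G=\mbox{E}^+$ explicitly. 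Both uses of skewness match: non-parallelism gives the full rotational part, non-intersection gives a genuine translation.

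One concrete step in your sketch is wrong as stated, though fixably so. Writing $X_i(p)=\widehat{u_i}\,p+b_i$, the linear part of $[X_1,[X_2,[X_1,X_2]]]$ is (up to sign) $\widehat{u_1\times(u_2\times(u_1\times u_2))}=-(u_1\cdot u_2)\,\widehat{u_1\times u_2}$, which vanishes only when the axes are orthogonal; so this triple bracket is a pure translation only in that special case, not in general. The clean repair is structural rather than a single bracket: the Lie algebra $\mathfrak g\subseteq\mathfrak{se}(3)$ generated by $X_1,X_2$ surjects onto $\mathfrak{so}(3)$ (non-parallelism), its intersection with the translation ideal is an $\mathfrak{so}(3)$-invariant subspace of $\R^3$ and hence is $0$ or $\R^3$, and if it were $0$ then $\mathfrak g$ would be a $3$-dimensional complement, i.e.\ the stabilizer of a single point, forcing $l_1$ and $l_2$ to meet --- contradicting skewness. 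Thus $\mathfrak g=\mathfrak{se}(3)$ contains all constant fields, and the bracket-generating condition holds at \emph{every} point without the case analysis over the exceptional algebraic set. With that replacement your proof is complete.
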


\begin{proof}
We will use the following notation:
$\mbox{E}^+$ is the group of all orientation-preserving isometries of $\R^3$.
For any point $P$, $\mbox{SO}(P)$ is the group of all orientation-preserving isometries that do not move the point $P$.
For any line $l$, $\mbox{R}(l)$ is the group of all rotations about $l$.
The subgroup of translations of $\R^3$ is denoted by $\mbox{T}$. 

Suppose that $l_1$ and $l_2$ are two skew lines in $\R^3$ and $S$ is invariant for the groups $\mbox{R}(l_1)$ and $\mbox{R}(l_2)$. Then it is also invariant for the generated group $\mbox{G}=\langle \mbox{R}(l_1), \mbox{R}(l_2)\rangle$.
We claim that $\mbox{G}=\mbox{E}^+$, which is sufficient to show that either $S=\varnothing$ or $S=\R^3$. Indeed, if for some points $X,Y\in \R^3$, we have $X\in S$ but $Y\not\in S$, then $S$ is not invariant for a translation by the vector $\overrightarrow{XY}$, a contradiction.

Rotate the line $l_1$ about $l_2$. We get a hyperboloid $H_1$ which contains $l_1$, and its axis is $l_2$
(if $l_1$ and $l_2$ are orthogonal, our hyperboloid is degenerate, it is a plane minus a disc, but the argument works also in this case). Similarly, rotate $l_2$ about $l_1$. We get a hyperboloid $H_2$ which contains $l_2$, and its axis is $l_1$. It follows, that the two hyperboloids intersect. Therefore, there is a rotated copy $l_1'$ of $l_1$, and a rotated copy $l_2'$ of $l_2$, that intersect in a point $P$. It is not hard to see that a rotation about $l_1'$ is equal to a conjugate of the rotation about $l_1$ by the same angle with an element of $\mbox{R}(l_2)$ that transforms $l_1$ into $l_1'$. Hence $\mbox{R}(l_1')\subset \mbox{G}$. Similarly, $\mbox{R}(l_2')$ is a subgroup of $\mbox{G}$, and thus so is $\mbox{G}'=\langle \mbox{R}(l_1'), \mbox{R}(l_2')\rangle$.
  
We claim that  $\mbox{G}'=\mbox{SO}(P)$. The inclusion  
$\mbox{G}' \subseteq \mbox{SO}(P)$ is obvious. Take a unit sphere ${\cal S}$ of center $P$.
The groups  $\mbox{R}(l_1')$, $\mbox{R}(l_2')$, $\mbox{SO}(P)$ are orientation-preserving isometries of ${\cal S}$.
It is known \cite[Theorem~3.20]{Ced89} that each element of $\mbox{SO}(P)$ is a rotation about a line through $P$.
Let $Q_1$ (resp.\ $Q_2$)  be an intersection of the line $l_1'$ (resp.\ $l_2'$) and ${\cal S}$. Consider the orbit $O(Q_1)$ of the point $Q_1$ under the action of $\mbox{G}'$. It contains at least one point $Q_1'$ different from $Q_1$. As earlier, each element of $\mbox{R}(PQ_1')$ is a conjugate of the corresponding element of $\mbox{R}(l_1')$ with an element of $\mbox{G}'$ that transforms $Q_1$ into $Q_1'$.  Hence $\mbox{R}(PQ_1')\subset \mbox{G}'$. If $\measuredangle Q_1PQ_1'<90^{\circ}$, then rotate $Q_1$ about the line 
$PQ_1'$ by $180^{\circ}$, let $Q_1''$ be the image. Then $Q_1''$ is also in $O(Q_1)$, and $\measuredangle Q_1PQ_1''=2\measuredangle Q_1PQ_1'$. By repeating this procedure if necessary, we get a point 
$V_1\in O(Q_1)$ such that $\measuredangle Q_1PV_1\ge 90^{\circ}$. All elements $\mbox{G}'$ are finite products of rotations about lines $l_1'=Q_1P$ and $l_2'=Q_2P$.  So, we can decrease all rotation angles continuously to $0$. This shows that $\mbox{G}'$ is connected, and we can find $V_1'\in O(Q_1)$ such that $\measuredangle Q_1PV_1'=90^{\circ}$. But the rotations about two orthogonal axes, $\mbox{R}(PQ_1)$ and $\mbox{R}(PV_1')$, act transitively on ${\cal S}$. Therefore, we can get all rotation axes, and, consequently, all rotations. This proves that  $\mbox{G}'=\mbox{SO}(P)$.

Consider the lines $l_1$ and $l_1'$. Line $l_1$ does not contain $P$, but $l_1'$ does. Applying an appropriate element of $\mbox{SO}(P)$, we can transform $l_1'$ into a line $l_1''$ which is parallel to $l_1$, but they are not identical.
Clearly, $\mbox{R}(l_1'')\subset \mbox{G}$. Take the composition of rotation about $l_1$ by angle $\alpha$ 
and a rotation about $l_1''$  by angle $-\alpha$. The result is a translation $\mbox{T}(\alpha)$. 
As $\alpha$ goes to $0$, the translation distance of $\mbox{T}(\alpha)$ also goes to $0$ continuously. 
So, we can obtain a translation by any given distance, but the direction changes. Taking its conjugate with the elements of $\mbox{SO}(P)$, we can change the directions of the translations arbitrarily, therefore, we can obtain 
any translation, so $\mbox{T}\subset \mbox{G}$. Finally, 
$\mbox{E}^+=\mbox{T}\cdot \mbox{SO}(P)$, therefore,  $\mbox{G}=\mbox{E}^+$, as desired. 
\end{proof}

Return to the proof of 
Theorem~\ref{theorem_triangle}.
Let $S'\subset \R^3\times \R^3$ be the following set of pairs of points.
Let the pair $(A, B)$ be in $S'$, and take its closure under the extension operation.
In other words, the elements of $S'$ are exactly those pairs that can be obtained by finitely many applications of the extension operation, starting with the pair $(A, B)$. 
By Claim  \ref{goodpair}, all pairs in $S'$ are good pairs. 
Let $S\subset \R^3$ be the set of all points included in at least one pair from $S'$. Suppose that $(C, D)$ is an extension of $(A, B)$, and let $(C', D')$ be a rotation of $(C, D)$ about the line $AB$, by an arbitrary angle. Then $(C',D')$ is also an extension of $(A, B)$. Therefore, the set $S$ is invariant under rotations about the lines $AB$ and $CD$. So we can apply Lemma \ref{rotations} to conclude that $S=\R^3$ or $S=\varnothing$. However, $S$ is certainly not empty, as $A, B \in S$, therefore, $S=\R^3$. Since all points of $S$ are either red or blue, we used only two colors for the whole space. Now the following result of Erd\H os et al.\ guarantees that some copy of $T$ is monochromatic, which completes the proof of \Cref{theorem_triangle} by contradiction.

\begin{Theorem}[Theorem 8 in \cite {EGMRSS1}]
For every triangle $T$, we have $\R^3 \xrightarrow{2} T$.
\end{Theorem}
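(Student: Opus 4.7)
I argue by contradiction: suppose we have a $2$-coloring of $\R^3$ with colors red and blue which avoids monochromatic copies of a fixed triangle $T$ with side lengths $a \le b \le c$. First, by applying the pigeonhole principle to an equilateral triangle of side $c$ placed in $\R^3$, I obtain two points of the same color at distance $c$; up to swapping colors, call them red $A, B$. Then, as in Claim \ref{goodpair}, the locus of points $Z \in \R^3$ with $ABZ \cong T$ is a pair of circles $C^+, C^-$ perpendicular to the line $AB$, and since no copy of $T$ is red, both circles must lie entirely in the blue color class.

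The main step is to iterate this third-vertex construction on a blue pair. Concretely, I would try to find two blue points at some distance in $\{a, b, c\}$ lying on $C^+ \cup C^-$: applying the same circle construction to this new blue pair yields two fully red circles, and the positions of $A, B$ relative to them are tightly constrained by the edge lengths of $T$. Tracking the combinatorics should eventually force some point to lie simultaneously on a blue circle (from the first iteration) and on a red circle (from the second), which is the contradiction. For ``fat'' triangles with altitude $h_c \ge c/2$ from the vertex opposite the $c$-side, two points of $C^+$ already realize distance $c$, making this step immediate; similarly, one might cross the two circles and exploit the freedom to rotate $C^-$ relative to $C^+$ about the axis $AB$, which sweeps out a continuous interval of inter-circle distances.

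The main obstacle is the ``flat'' regime where $h_c$ is small relative to $c$, so that neither a single circle nor pairs taken across $C^+$ and $C^-$ directly realize any of the distances $a, b, c$. In this regime additional ingredients are needed: iterating the construction further using $a$ or $b$ as the base edge instead of $c$, or exploiting the full three-dimensional rotation group $\mathrm{SO}(3)$ in the spirit of Lemma \ref{rotations} to propagate the blue and red loci over sufficiently large regions of $\R^3$ to force an incidence. This is the point at which the extra degree of freedom in $\R^3$ (as opposed to $\R^2$) must be used in an essential way, since flat triangles are not known to be $2$-Ramsey in the plane.
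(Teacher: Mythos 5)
First, a point of orientation: the paper does not prove this statement at all --- it is quoted verbatim from Erd\H{o}s et al.\ \cite[Theorem~8]{EGMRSS1} and used as a black box to finish the proof of \Cref{theorem_triangle}, so there is no internal proof to compare yours against. Your submission therefore has to stand on its own as a proof of the classical result, and it does not: it is an explicitly unfinished plan rather than an argument. The opening moves are fine (pigeonhole on an equilateral triangle of side $c$ gives a monochromatic pair $A,B$ at distance $c$; the locus of third vertices completing $A,B$ to a copy of $T$ is a pair of circles $C^{\pm}$, which must then be entirely blue), but everything after that is conditional.

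Concretely, there are two gaps. The smaller one is in the ``fat'' case $2h_c \ge c$: after producing the all-blue circles $C^{\pm}$ and, from a blue pair $P,Q$ on $C^+$ at distance $c$, the all-red circles $D^{\pm}$, you assert that ``tracking the combinatorics should eventually force'' a point lying on both a blue and a red locus. That incidence is exactly what has to be proved, and it is not automatic: $D^{\pm}$ are perpendicular to the chord $PQ$ of $C^+$, not to $AB$, and nothing you have written shows that $D^{\pm}$ meets $C^+\cup C^-$ (nor, alternatively, that a red point can be located on $D^{\pm}$ to restart the argument in a way that terminates). The larger gap is the flat regime $2h_c < c$, where you state outright that ``additional ingredients are needed'' and list candidate directions (re-basing on the sides $a$ or $b$, invoking rotations as in \Cref{rotations}) without carrying any of them out; since the theorem is claimed for \emph{every} triangle, this case cannot be waved away --- as you note yourself, it is precisely the regime that keeps the planar analogue open. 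A further caution: the propagation machinery of \Cref{goodpair} and \Cref{rotations}, which your sketch echoes, is not directly available here, because the extension operation of Section~2.1 requires $T$ to be acute (both circle centers must lie inside the segment, and $\measuredangle ACB<90^{\circ}$ is used to find the new pair at distance $c$), whereas Theorem~8 covers right and obtuse triangles as well. As it stands, the proposal establishes only the first step and should be regarded as a research plan, not a proof.
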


\subsection{Hypercubes -- proof of Theorem~\ref{cube}}

For any $A_1 \subset\R^{d_1}, \ldots ,A_n \subset\R^{d_n}$, 
their {\em Cartesian product} 
$A_1\times \cdots\times A_n$ is $\left\{ (\x_1, \ldots, \x_n)\ :\ \x_i\in A_i\right\}\subset\R^{d_1+\cdots +d_n}$. 
If $A_1=\cdots =A_n=A$, then 
$A_1\times\cdots\times A_n$ is the {\em Cartesian power} of $A$ and it is denoted by $A^n$ for short. 
Let $I^m(a)$ be an $m$-dimensional hypercube of sidelength $\sqrt{a}$ and $S_m(a)$ be an $m$-vertex, that is, an ($m-1$)-dimensional 
regular simplex of sidelength $\sqrt{a}$. 
Considering the neighbors of a fixed vertex, it is easy to see that $I^{m}(1)$ contains $S_m(2)$ as a subset. We also need the following asymmetric generalization of the arrow notation: by $C \MR (A,B)$, we denote that claim that every coloring of $C$ contains either a monochromatic copy of $A$, or a rainbow copy of $B$.

The main result of this section is the following statement.

\begin{Proposition} \label{Prop1}
	For all $k,m \in \N$, there exists $n=n_1(k,m)$ such that $I^n(1) \MR (I^k(2^k), I^m(2^k))$.
\end{Proposition}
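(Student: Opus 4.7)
I would identify the vertex set of $I^n(1)$ with $\{0,1\}^n$, so that Euclidean distance equals the square root of Hamming distance; under this identification, an isometric copy of $I^j(2^k)$ corresponds exactly to a $j$-dimensional combinatorial subspace of $\{0,1\}^n$ whose variable blocks all have size $2^k$. The overall strategy is to apply a canonical-type Ramsey theorem --- in the spirit of the canonical Hales--Jewett theorem for the binary alphabet, formally discussed in \Cref{Sec3} --- to extract a highly symmetric canonical $K$-subspace inside $\{0,1\}^n$, and then to exploit a short pigeonhole dichotomy to find either a monochromatic $k$-dimensional sub-copy or a rainbow $m$-dimensional sub-copy within it.

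Set $K := k+m-1$. For $n$ sufficiently large, I would produce a combinatorial $K$-subspace $V\subseteq\{0,1\}^n$ whose $K$ variable blocks all have size exactly $2^k$ and on which the coloring is canonical in the following sense: identifying $V$ with $\{0,1\}^K$ via its free coordinates, there exists a subset $S\subseteq[K]$ such that for any two parameter tuples $\epsilon,\epsilon'\in\{0,1\}^K$ one has $\chi(\epsilon)=\chi(\epsilon')$ if and only if $\epsilon_i=\epsilon'_i$ for every $i\in S$. The extreme cases $S=\varnothing$ and $S=[K]$ correspond respectively to the whole $V$ being monochromatic and rainbow.

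The conclusion now follows from a short pigeonhole on $|S|$. If $|S|\le K-k=m-1$, then $[K]\sm S$ contains at least $k$ indices; fixing the $S$-coordinates to arbitrary values and varying any $k$ of the remaining coordinates yields a $k$-sub-subspace on which $\chi$ is constant, and since it inherits variable blocks of size $2^k$, this is a monochromatic copy of $I^k(2^k)$. If instead $|S|\ge m$, varying any $m$ of the $S$-coordinates while fixing all others gives an $m$-sub-subspace on which $\chi$ is injective, hence a rainbow copy of $I^m(2^k)$.

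The main technical obstacle is to guarantee that the canonical $K$-subspace $V$ has variable blocks of exactly the prescribed equal size $2^k$, since standard formulations of the canonical Hales--Jewett theorem produce a canonical combinatorial subspace without a priori control over the sizes of its variable blocks. A natural remedy is to pre-embed $\{0,1\}^M$ into $\{0,1\}^n$ for $n=M\cdot 2^k$ via the block-constant map sending $\V\in\{0,1\}^M$ to the vector equal to $\V_i$ on each of the $M$ blocks $B_i$ of size $2^k$, apply canonical Hales--Jewett within the smaller cube $\{0,1\}^M$, and then pigeonhole on the resulting variable-block sizes in $[M]$ to pass to a sub-subspace with equal-sized variable blocks; alternatively, a stronger parameter-set version of Graham--Rothschild that directly produces combinatorial subspaces with prescribed equal block sizes could be invoked in one shot.
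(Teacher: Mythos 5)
Your reduction to combinatorial subspaces of $\{0,1\}^n$ has a fatal flaw, and it is not the block-size bookkeeping you flag as ``the main technical obstacle'' but the restriction to combinatorial subspaces itself. First, the identification is not ``exact'': an isometric copy of $I^j(2^k)$ in $\{0,1\}^n$ need not be a combinatorial subspace, since e.g.\ the vertices of an isometric square only satisfy $P_1+P_3=P_2+P_4$, so the coordinates in which adjacent vertices differ may flip from $1$ to $0$ as well as from $0$ to $1$; combinatorial subspaces realize only the ``monotone'' copies. This extra generality is essential: the weight coloring $\chi(\x)=\sum_i x_i$ defeats your plan outright. On any $j$-dimensional combinatorial subspace whose variable blocks all have the same size $s$, the induced coloring of the parameter cube $\{0,1\}^j$ is $\epsilon\mapsto w_0+s|\epsilon|$, which for $j\ge 2$ is neither monochromatic nor rainbow (the $j$ parameter vectors of weight one collide) and is not of your canonical form ``determined by the coordinates in some $S\subseteq[j]$'' for any $S$ (the vectors $e_1,e_2$ get equal colors while differing in coordinates $1$ and $2$, forcing $S=\varnothing$, which would make the coloring constant). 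So for this coloring no equal-block canonical subspace $V$ exists, your pigeonhole on $|S|$ has nothing to act on, and neither target configuration occurs among equal-block combinatorial subspaces at all. The canonical Hales--Jewett theorem itself survives only because it may choose variable blocks of pairwise distinct sizes with distinct subset sums, making the weight coloring rainbow on the subspace --- precisely the freedom you must surrender to obtain congruent, rather than merely combinatorially similar, cubes. Your two remedies do not repair this: the block-constant pre-embedding followed by a pigeonhole on block sizes yields equal blocks of some uncontrolled common size $u$, hence a cube of sidelength $\sqrt{2^k u}$ rather than $\sqrt{2^k}$; and the ``stronger Graham--Rothschild with prescribed equal block sizes'' you hope to invoke in one shot is false, again by the weight coloring.

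For contrast, the paper's proof never touches combinatorial subspaces: it proceeds by induction on $k$, and its base case locates the copies of $I^m(2)$ inside a Cartesian power of simplices $S_d(2)^m$, i.e., inside a constant-weight slice of the hypercube where the relevant edge vectors have the form $e_i-e_j$ and the weight coloring is constant. Any Hales--Jewett-style salvage of your argument would have to canonize over such slices (or over a larger alphabet encoding $S_d(2)$), not over $\{0,1\}^n$ itself.
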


\Cref{cube} follows from the special case $k=m$, i.e., from the claim that $I^n(1) \MR I^m(2^m)$.
We prove \Cref{Prop1} by induction on $k$ for any fixed $m$. First we prove the base case. 

\begin{Proposition} \label{Prop2}
	For all $m \in \N$, there exists $n=n_2(m)$ such that $I^n(1) \MR (I(2), I^m(2))$.
\end{Proposition}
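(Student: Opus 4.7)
I would argue by induction on $m$. The base case $m=1$ is immediate: the no-monochromatic-$I(2)$ hypothesis asserts that any two vertices of $\{0,1\}^n$ at Hamming distance $2$ receive distinct colors, which by definition is a rainbow $I^1(2)$; hence $n_2(1)=2$ suffices.

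For the inductive step, I would isolate the last pair of coordinates as a ``new direction'' and split $\{0,1\}^n=\{0,1\}^{n-2}\times\{0,1\}^2$. For every $y\in\{0,1\}^{n-2}$ the hypothesis forces $c(y,00)\neq c(y,11)$ and $c(y,01)\neq c(y,10)$. The slice $y\mapsto c(y,00)$ inherits the no-monochromatic-$I(2)$ property (any distance-$\sqrt{2}$ pair of $y$'s lifts to a distance-$\sqrt{2}$ pair in the full cube via a common $00$ suffix), so by the inductive hypothesis (provided $n-2\geq n_2(m-1)$) there is a rainbow $I^{m-1}(2)$ on points $\{(y_s,00)\}_{s\in\{0,1\}^{m-1}}$. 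The natural type-$A$ extension via partners $\{(y_s,11)\}_s$ produces an $I^m(2)$ in which every partner pair is already bichromatic.

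The main obstacle is ruling out the remaining color collisions: $c(y_s,11)$ could repeat across different $s$, or could coincide with $c(y_{s'},00)$ for some $s'\neq s$. To handle this I would strengthen the inductive statement to deliver many ``parallel'' rainbow $I^{m-1}(2)$'s on pairwise disjoint coordinate blocks of a much larger ambient cube, then pigeonhole on the $2^{m-1}$-tuple describing the extension colors relative to each base. Since any extension color is either one of the $2^{m-1}$ base colors (finitely many options) or novel, the resulting ``collision pattern'' takes finitely many values, and Ramsey-style pigeonholing over sufficiently many parallel copies forces one of them to admit a genuinely rainbow type-$A$ extension, yielding the desired rainbow $I^m(2)$. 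An alternative route would identify $\{0,1\}^n$ with $(\{0,1\}^2)^{n/2}$ and apply a canonical Hales--Jewett style argument: the diagonal constraints $00\not\sim 11$ and $01\not\sim 10$ sharply restrict the allowable canonical equivalence relations, and chaining such structures on $m$ disjoint coordinate blocks assembles a rainbow $I^m(2)$. In either approach, choosing the right strengthened hypothesis so it propagates cleanly through the induction, and estimating the required ambient dimension, is where most of the technical work lies.
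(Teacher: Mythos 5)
There is a genuine gap in the inductive step. You correctly identify the obstacle — after extending a rainbow $I^{m-1}(2)$ on $\{(y_s,00)\}_s$ by the partners $\{(y_s,11)\}_s$, the colors $c(y_s,11)$ may collide with each other or with $c(y_{s'},00)$ for $s'\neq s$ — but the proposed fix does not resolve it. Pigeonholing many parallel copies by their ``collision pattern'' only tells you that two copies fail \emph{in the same way}; it does not force any copy to be collision-free, and you give no mechanism by which two identically-failing copies combine into a rainbow $I^m(2)$. The hard collisions are those of the form $c(y_s,11)=c(y_{s'},11)$ where $y_s$ and $y_{s'}$ differ in at least two coordinates of the base: the two points are then at distance strictly greater than $\sqrt 2$, so the no-monochromatic-$I(2)$ hypothesis places no constraint on them, and replacing the suffix $11$ by another partner of $00$ produces a ``parallel'' pair that is equally unconstrained. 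So neither varying the new direction nor taking disjoint parallel blocks kills these collisions, and your admission that ``choosing the right strengthened hypothesis\dots is where most of the technical work lies'' is precisely the missing proof.

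The paper avoids induction on $m$ altogether. It embeds into $I^n(1)$ a Cartesian product $S_d^m(2)$ of $m$ regular simplices of sidelength $\sqrt2$ with $d>4^m$ vertices each (the neighbors of a fixed vertex of a subcube), and chooses an ordered pair of vertices from each factor uniformly and independently; the product of these pairs is a random copy of $I^m(2)$. Under the no-monochromatic-$I(2)$ hypothesis every simplex $S_{d-1}(2)$ is rainbow, so for each of the fewer than $4^m\le d-1$ ``types'' of collision (pairs of vertices of $I^m(2)$), conditioning on all random choices except the one vertex of the first factor in which the two colliding points differ shows that at most one of the $d-1$ candidate positions realizes that collision. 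A union bound then gives a rainbow copy with positive probability. The essential ingredient your sketch lacks is exactly this: a large rainbow simplex in \emph{every} coordinate direction, so that each collision type is pinned down by a single varying vertex ranging over $d-1$ pairwise-distinctly-colored positions. With only the two-element block $\{00,11\}$ there is no such slack.
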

\begin{proof}
	We use the probabilistic method to prove the following technical statement.
	
	\vspace{2mm}
	\noindent
	\textit{For all $m,d\in \N$ such that $d>4^m$, the Cartesian power $S_d^{m}(2)$ satisfies $S_d^{m}(2) \MR (I(2), I^m(2))$.}
	\vspace{1mm}

This implies Proposition \ref{Prop2} since  $S_d^m(2)\subset I^{md}(1)$.
Consider an arbitrary coloring of $S_d^{m}(2)$ 
and suppose that there is no monochromatic copy of  $I(2)$.

Choose an ordered pair of points from each of the $m$ simplices uniformly and independently at random. 
The Cartesian product $C$ of these ordered 2-point sets is a copy of $I^m(2)$ with a natural bijection between their points. 
If $C$ is not rainbow, then some two of its vertices have the same color. 
For any pair of vertices of $I^m(2)$, the event that the corresponding points of $C$ are of the same color, is called a 
{\em bad event}. The \textit{type} of the bad event is the pair of vertices. 
Note that a non-rainbow $C$ can belong to several types at the same time. 

There are 
$2^{m}(2^m-1)/2 < 4^m \le d-1$ pairs of vertices of $I^m(2)$, so there are fewer than $d-1$ types of bad events. 
Consider now a fixed bad event. The corresponding points, $p$ and $q$  
of $I^m(2)$ have different coordinates, 
assume without loss of generality that their first coordinates are different. 
We want to estimate the probability that the points of $C$ corresponding to $p$ and $q$ are of the same color. Fix all pairs of points in each of the simplices $S_d(2)$, except in the first one.
In the first one, fix only one of the points, say, $a$, so that the position of the point corresponding to $p$ is already fixed. 
We still have to take the other point, $b$, from the first simplex. 
As we go over all vertices of the first simplex, except $a$, the possible locations of the point of $C$ that corresponds to $q$ go over the vertices of a simplex $S_{d-1}(2)$. 
Since no copy of $I(2)$ is monochromatic, this copy of $S_{d-1}(2)$ is rainbow. So we get that
the bad event of the given type at most once.
This implies that the probability of each bad event is at most $1/(d-1)$. 
Now the union bound implies that a random copy of $I^m(2)$ is rainbow with a positive probability, which completes the proof. 
\end{proof}

Before the induction step, we need one more simple statement.

\begin{Proposition} \label{Prop3}
 For all $r, m \in \N$, there exists $n=n_3(r,m)$ such that $I^n(1) \xrightarrow{r} I^m(2)$.
\end{Proposition}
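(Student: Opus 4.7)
The plan is to embed $I^m(2)$ into $I^n(1)$ via a Cartesian power of simplices -- an embedding already exploited in the proof of \Cref{Prop2} -- and then appeal to the Gallai--Witt theorem. Recall that $S_d(2)$, the $d$-vertex regular simplex of side $\sqrt{2}$, embeds isometrically into $I^d(1)$ as the $d$ standard basis vectors $e_1,\dots,e_d\in\{0,1\}^d$, and hence its $m$-th Cartesian power $S_d^m(2)$ embeds into $I^{md}(1)$.

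First I would observe that any subset of $S_d^m(2)$ of the form $A_1\times\cdots\times A_m$ with $|A_i|=2$ is an isometric copy of $I^m(2)$. Indeed, each pair of distinct vertices of $S_d(2)$ is at distance $\sqrt{2}$, so the coordinatewise Pythagorean computation gives that the squared distance between any two such product points equals $2H$, where $H$ is the Hamming distance between the corresponding labels in $\{0,1\}^m$; this matches exactly the metric of $I^m(2)$.

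With this reduction, the task becomes purely combinatorial: find $d=d(r,m)$ such that every $r$-coloring of $[d]^m$ admits a monochromatic ``$2^m$-box'' $A_1\times\cdots\times A_m$ with $|A_i|=2$. For this I would invoke the classical Gallai--Witt theorem (a corollary of the Hales--Jewett theorem): for every finite $F\subset\Z^m$ and every $r\in\N$, there exists $d$ such that every $r$-coloring of $[d]^m$ contains a monochromatic homothetic copy $a+tF$ for some $a\in\Z^m$ and $t\in\N$. Applying it with $F=\{0,1\}^m$ produces a monochromatic subset of the form $\{a_1,a_1+t\}\times\cdots\times\{a_m,a_m+t\}$, which is precisely a $2^m$-box of the required shape. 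Setting $n_3(r,m):=m\cdot d(r,m)$ then finishes the proof.

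The main ingredient here is the Gallai--Witt theorem itself, which is the only non-trivial input; the rest is a routine translation through the simplex-power embedding. A pleasant feature of this approach is that the homothetic factor $t$ produced by Gallai--Witt plays no role in the final geometry: any two distinct vertices of $S_d(2)$ are at distance exactly $\sqrt{2}$, so $t$ is automatically absorbed and the resulting monochromatic copy has precisely the desired side length $\sqrt{2}$.
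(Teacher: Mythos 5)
Your proof is correct, but it reaches the monochromatic box by a genuinely different device than the paper does. Both arguments share the same skeleton: embed a Cartesian product of regular simplices of side $\sqrt{2}$ into a unit hypercube (each $S_d(2)$ as the standard basis vectors of $\{0,1\}^d$), observe that any product $A_1\times\cdots\times A_m$ with $|A_i|=2$ is an isometric copy of $I^m(2)$ because the squared distance is twice the Hamming distance of the labels, and thereby reduce the problem to finding a monochromatic combinatorial box in an $r$-coloring of a grid $[d]^m$ (or, in the paper, of $[n_1]\times\cdots\times[n_m]$). The divergence is in the combinatorial core. The paper finds the box by an elementary iterated pigeonhole: for $m=1$, among more than $r$ vertices two share a color; for the induction step, one factor is taken so large that two of the `layers' $S_{n_1}(2)\times\cdots\times S_{n_{m-1}}(2)\times\{x\}$ are colored identically, and the induction hypothesis is applied inside one of them. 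You instead invoke the Gallai--Witt theorem with $F=\{0,1\}^m$ to obtain a monochromatic homothet $a+t\{0,1\}^m$, which is a box with two-element factors since $t\ge 1$; your observation that the dilation factor $t$ is absorbed because all pairs of simplex vertices lie at the same distance is exactly right. The trade-off is that Gallai--Witt is a strictly stronger tool than the statement requires --- you are demanding an equal-gap box where an arbitrary one suffices --- and the bounds on $d(r,m)$ it inherits from the Hales--Jewett theorem are far worse than the explicit tower produced by the pigeonhole argument, which matters for the quantitative discussion of $n(m)$ in the concluding remarks. As a self-contained verification of \Cref{Prop3}, however, your argument is complete.
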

\begin{proof}
We prove the following technical statement by induction.

\vspace{1mm}
\noindent
\textit{For all $n_1, \dots , n_m$ such that $n_1>r, n_2> n_1^r, \dots, n_m > (n_1\cdot\dotsc\cdot n_{m-1})^r$, we have $S_{n_1}(2)\times\dots\times S_{n_m}(2) \xrightarrow{r} I^m(2)$.}
\vspace{1mm}

This implies the original statement since  $S_{n_1}(2)\times\dots\times S_{n_m}(2)\subset I^n(1)$ 
for $n=n_1+\dots+n_m$.
The $m=1$ case is trivial, since among $n_1>r$ vertices of $S_{n_1}(2)$ some two are of the same color by the pigeonhole principle. To prove the induction step, we consider $S_{n_1}(2)\times\dots\times S_{n_m}(2)$ as the union of $n_m$ `layers' of the form $S_{n_1}(2)\times\dots\times S_{n_{m-1}}(2)\times\{x\}$, $x \in S_{n_m}(2)$. Since $n_m > (n_1\cdot\dotsc\cdot n_{m-1})^r$, some two layers are colored identically by the pigeonhole principle. Now the union of a monochromatic copy of $I^{m-1}(2)$ from one of these layers, which must exist by the induction hypothesis, with the corresponding set from the other layer form the desired monochromatic copy of $I^{m}(2)$.
\end{proof}

Now we complete the proof of \Cref{Prop1} for $k>1$ by induction. We show that the desired statement holds for
\begin{equation*}
	n=n_1(k,m) \coloneqq n' + n_3\big(r', m'\big), \mbox{ where } n' = n_2(m) 2^{k-1}, \ r' = 2^{n'+k}, \ m'=n_1(k-1,m).
\end{equation*}
Fix an arbitrary coloring of $I^n(1)$.

For each $\x \in I^{n-n'}(1)$, we consider a `layer' $\{\x\}\!\times\! I^{n'}(1) \subset I^n(1)$. By partitioning the $n'$ basic vectors of $I^{n'}(1)$ into $n_2(m)$ classes of size $2^{k-1}$ and applying \Cref{Prop2}, we conclude that $I^{n'}(1) \MR (I(2^k), I^m(2^k))$. If at least for one $\x \in I^{n-n'}(1)$, the second alternative holds, that is, there exists a rainbow copy of $I^m(2^k)$ in the layer $\{\x\}\!\times\! I^{n'}(1)$, then we are done. So we can assume without loss of generality that for all $\x \in I^{n-n'}(1)$, there exists a monochromatic copy of $I(2^k)$ in the layer $\{\x\}\!\times\! I^{n'}(1)$. 

Consider the auxiliary $r'$-coloring of $I^{n-n'}(1)$, where each vertex $\x$ is colored according to which of the $r'$ copies of $I(2^k)$ in the layer $\{\x\}\!\times\! I^{n'}(1)$ is monochromatic.
If the layer contains several monochromatic copies of $I(2^k)$, we pick one of them arbitrarily. 
Now \Cref{Prop3} implies that there exists a copy of $I^{m'}(2)$ that is monochromatic under this auxiliary coloring. In terms of the original coloring, this gives us two identically colored copies of $I^{m'}(2)$ that form a `prism' $I^{m'}(2) \times I(2^k)$. 

Applying the induction hypothesis to the base of this prism, we find either a monochromatic copy of $I^{k-1}(2\cdot2^{k-1})$, or a rainbow copy of $I^m(2\cdot2^{k-1})$ there. In the latter case, we are done immediately. In the former case, the union of this monochromatic copy of $I^{k-1}(2^k)$ in the first level of the prism with the corresponding identically colored set from the second level form the desired monochromatic copy of $I^k(2^k)$. This completes the proof of \Cref{Prop1}.

\section{Maximum norm} \label{Sec3}

\subsection{Hales--Jewett theorem} \label{Sec_HJ}

In this subsection, we discuss the Hales--Jewett theorem, which is a central result in Ramsey theory and also the main tool of this section.
Informally speaking, it states that a sufficiently high dimensional grid, colored with a given number of colors,  contains a given dimensional, 
monochromatic subgrid of a certain type, described below.

For $k,n \in \N$, let $\tau$ be an element of $([k]\cup\{*\})^n$ with at least one $*$-coordinate, where $*$ is an abstract symbol that we call a \textit{variable}. We define a \textit{combinatorial line} corresponding to $\tau$ as a subset $\{\tau(i): i \in [k]\} \subset [k]^n$, where $\tau(i)$ is an element of $[k]^n$ that we obtain by replacing all the variable coordinates of $\tau$ by $i$. More generally, let $\tau$ be the element of $([k]\cup\{*_1,\dots,*_m\})^n$ with at least one variable coordinate of each of the $m$ types. We define an \textit{$m$-dimensional combinatorial subspace} corresponding to $\tau$ as a subset $\{\tau(i_1,\dots,i_m): i_1,\dots,i_m \in [k]\} \subset [k]^n$, where $\tau(i_1,\dots,i_m)$ is an element of $[k]^n$ that we obtain by replacing all the variable coordinates of $\tau$ of the $j$-th type by $i_j$ for each $j \in [m]$. In this notation, the multidimensional Hales--Jewett theorem is the following statement.

\begin{Theorem}[Multidimensional Hales--Jewett theorem] \label{theorem_MHJ}
	For all $k,r,m \in \N$, there is a sufficiently large $n \in \N$ such that for every $r$-coloring of $[k]^n$, there exists a monochromatic $m$-dimensional combinatorial subspace.
\end{Theorem}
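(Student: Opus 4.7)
The plan is to deduce the multidimensional Hales--Jewett theorem from the classical one-dimensional version (the case $m=1$) by induction on $m$, using a standard product-coloring trick to reduce the dimension of the subspace we are looking for at the cost of a large blow-up in the number of colors.

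For the inductive step, fix $k$, $r$, and $m \ge 2$, and assume the statement holds for $m-1$. First, I would take $n'$ large enough for the inductive hypothesis to apply with parameters $k$, $r$, $m-1$, so that every $r$-coloring of $[k]^{n'}$ contains a monochromatic $(m-1)$-dimensional combinatorial subspace. Let $R = r^{k^{n'}}$, which is the total number of $r$-colorings of $[k]^{n'}$, and choose $n''$ large enough for the one-dimensional Hales--Jewett theorem to apply to $R$-colorings of $[k]^{n''}$. Finally, set $n = n' + n''$ and identify $[k]^n$ with $[k]^{n'} \times [k]^{n''}$.

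Given an arbitrary $r$-coloring $\chi : [k]^n \to [r]$, for every $y \in [k]^{n''}$ define the restriction $\chi_y : [k]^{n'} \to [r]$ by $\chi_y(x) = \chi(x,y)$. The map $y \mapsto \chi_y$ is an $R$-coloring of $[k]^{n''}$, so by the one-dimensional Hales--Jewett theorem there is a combinatorial line $L \subset [k]^{n''}$, coming from a template $\tau_L$ on variable symbol $*_m$, on which this auxiliary coloring is constant. In other words, there is a single function $\chi_* : [k]^{n'} \to [r]$ such that $\chi(x,y) = \chi_*(x)$ for all $y \in L$ and all $x \in [k]^{n'}$. Applying the inductive hypothesis to $\chi_*$ yields a monochromatic $(m-1)$-dimensional combinatorial subspace $S \subset [k]^{n'}$, coming from a template $\tau_S$ on variables $*_1, \dots, *_{m-1}$. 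Concatenating the templates, $\tau_S$ followed by $\tau_L$, gives a template on variables $*_1, \dots, *_m$ whose associated $m$-dimensional combinatorial subspace is precisely $S \times L \subset [k]^n$, and it is monochromatic because $\chi$ on $S \times L$ factors through $\chi_*$ on $S$.

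The main conceptual ingredient is the product-coloring trick that converts an $r$-coloring of $[k]^{n'} \times [k]^{n''}$ into an $R$-coloring of the second factor, so that a single application of the classical Hales--Jewett theorem synchronizes the coloring across the line $L$; once this is done, the concatenation of templates gives the required subspace essentially for free. The real obstacle is of course the one-dimensional Hales--Jewett theorem itself, which is a deep classical result, but it is available off the shelf and is the only non-trivial black box needed here.
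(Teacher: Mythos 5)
Your proof is correct. Note that the paper does not actually prove this statement: it quotes the $m=1$ case from Hales and Jewett and attributes the multidimensional generalization to Graham and Rothschild, so there is no in-paper argument to compare against. Your induction on $m$ via the product-coloring trick --- recoloring $[k]^{n''}$ by the $R = r^{k^{n'}}$ possible restrictions of $\chi$ to the fibers $[k]^{n'} \times \{y\}$, extracting a line $L$ on which the restriction is constant, and then applying the inductive hypothesis inside a single fiber --- is the standard and fully rigorous derivation of the multidimensional theorem from the one-dimensional one. The only point worth double-checking is that the concatenated template $(\tau_S, \tau_L)$ genuinely uses each of the $m$ variable symbols at least once, which it does since $\tau_S$ contains each of $*_1, \dots, *_{m-1}$ and $\tau_L$ contains $*_m$; with that observed, $S \times L$ is indeed an $m$-dimensional combinatorial subspace in the sense defined in the paper, and it is monochromatic because $\chi$ restricted to $S \times L$ factors through $\chi_*$ on $S$. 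Modulo the classical one-dimensional Hales--Jewett theorem, which you correctly flag as the one non-trivial black box, the argument is complete.
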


Hales and Jewett~\cite{HJ63} proved this theorem for $m=1$ in 1963, while the multidimensional 
generalization is due to Graham and Rothschild~\cite{GR71}. 
Furstenberg and Katznelson\cite{FK91} proved a density versions of this theorem using 
ergodic theory, while in 2009, the Polymath Project~\cite{P12} developed a combinatorial proof. 

For the canonical version, if $k>2$, then it is not true that for every coloring of $[k]^n$, there exists either a monochromatic or a rainbow $m$-dimensional combinatorial subspace even if $n$ is arbitrary large. Indeed, let $\sim$ be an equivalence relation on $[k]$. Consider a coloring $c_{\sim}$ of $[k]^n$ such that for all $\x = (x_1,\dots,x_n)$ and $\y=(y_1,\dots,y_n)$ in $[k]^n$, we have
\begin{equation*}
	c_{\sim}(\x) = c_{\sim}(\y) \mbox{ \ if and only if \ } x_1\sim y_1, \dots , x_n \sim y_n.
\end{equation*}
Observe that the induced coloring on every $m$-dimensional combinatorial subspace of $[k]^n$ 
coincides with the coloring $c_{\sim}$ on $[k]^m$ under a natural isomorphism. 
Therefore, no $m$-dimensional combinatorial subspace is monochromatic or rainbow, unless $\sim$ is a \textit{trivial} 
equivalence relation, that is, 
the elements of $[k]$ are either pairwise equivalent or pairwise non-equivalent. 
However, Pr\"omel and Voigt~\cite[Theorem~C.7]{PV83} showed that one can always find a 
combinatorial subspace colored according to some $c_\sim$, see also~\cite{NPRV85}.

\begin{Theorem}[Canonical Hales--Jewett theorem] \label{theorem_CHJ}
	For all $k,m \in \N$, there is a sufficiently large $n \in \N$ such that for every coloring of $[k]^n$, 
 there exists an $m$-dimensional combinatorial subspace such that the induced coloring on it coincides 
 with $c_\sim$ for some equivalence relation $\sim$ on $[k]$.
\end{Theorem}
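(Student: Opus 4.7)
The plan is to deduce \Cref{theorem_CHJ} from the multidimensional Hales--Jewett theorem (\Cref{theorem_MHJ}) via an auxiliary finite-palette coloring argument. The fundamental difficulty is that \Cref{theorem_MHJ} requires a bounded number of colors, while in the canonical setting the given coloring $c$ of $[k]^n$ may use arbitrarily many. I sidestep this by forgetting the actual colors and recording only the equivalence pattern they induce on any potential target $m$-dimensional subspace.

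Fix $k,m\in\N$ and let $c$ be an arbitrary coloring of $[k]^n$. For any $m$-dimensional combinatorial subspace $V\subseteq[k]^n$ specified by $\tau\in([k]\cup\{*_1,\ldots,*_m\})^n$, the restriction $c\circ\tau:[k]^m\to\mathrm{Colors}$ induces an equivalence relation $E_V$ on $[k]^m$ defined by $\x\,E_V\,\y$ if and only if $c(\tau(\x))=c(\tau(\y))$. Since the number of equivalence relations on the finite set $[k]^m$ equals the Bell number $B_{k^m}$, the assignment $V\mapsto E_V$ is a coloring of $m$-dimensional subspaces with a number of colors depending only on $k$ and $m$. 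I then invoke the Graham--Rothschild partition theorem for combinatorial subspaces, a standard iterated consequence of \Cref{theorem_MHJ}: for $n$ sufficiently large, there exists a combinatorial subspace $W\subseteq[k]^n$ of dimension $M\gg m$ such that every $m$-dimensional sub-subspace of $W$ receives the same auxiliary color, i.e.\ induces one and the same equivalence relation $E^*$ on $[k]^m$.

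It remains to show that $E^*$ has the form ``coordinatewise $\sim$'' for some equivalence relation $\sim$ on $[k]$. I define $a\sim b$ by testing whether a single-coordinate swap $a\leftrightarrow b$ preserves color inside a reference $m$-dimensional sub-subspace of $W$; the uniform stabilization to $E^*$ forces this definition to be independent of the chosen coordinate, the chosen reference subspace, and the values placed in the untouched coordinates, and to yield an equivalence relation on $[k]$. An induction on the Hamming distance between $\x,\y\in[k]^m$, combining pairs of sub-subspaces of $W$ that overlap in all but one coordinate, then propagates single-swap equalities to the full identification $\x\,E^*\,\y$ if and only if $x_i\sim y_i$ for every $i\in[m]$. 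Any single $m$-dimensional sub-subspace of $W$ is then the desired subspace on which $c$ restricts to a $c_\sim$-coloring. The main obstacle is this final identification: although the homogeneity of $E^*$ makes every single-coordinate test rigid, chaining such tests into arbitrary-word equalities requires a careful selection of overlapping sub-subspaces inside $W$, which is the technical heart of the Pr\"omel--Voigt argument~\cite{PV83}.
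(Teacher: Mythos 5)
First, a point of comparison: the paper does not prove \Cref{theorem_CHJ} at all --- it is quoted from Pr\"omel and Voigt \cite[Theorem~C.7]{PV83} --- so there is no in-paper proof to measure your attempt against. Your strategy (record, for each $m$-dimensional subspace $V$, only the equivalence relation $E_V$ that the coloring induces on $[k]^m$, a palette of at most $B_{k^m}$ colours; homogenize via the Graham--Rothschild theorem \cite{GR71} to obtain an $M$-dimensional subspace $W$, $M\gg m$, all of whose $m$-dimensional sub-subspaces induce one and the same relation $E^*$; then show $E^*$ has product form) is precisely the standard route to this theorem, and it does work.

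The gap is in the final identification. Your Hamming-distance induction, chaining single-coordinate swaps through intermediate points, proves only the implication ``$x_i\sim y_i$ for all $i$ implies $\x\,E^*\,\y$''. It cannot give the converse: if $\x\,E^*\,\y$ and the points differ in several coordinates, you have no control over whether the intermediate points of a swap chain are $E^*$-equivalent to the endpoints. Nor is the converse a formal consequence of single-swap rigidity plus permutation invariance: the parity relation on $[2]^2$, in which $(1,1)\,E^*\,(2,2)$ and $(1,2)\,E^*\,(2,1)$ but nothing else is identified, is symmetric under coordinate exchange, passes every single-swap test (it induces $\sim\;=\;$ equality), and yet is not of the form $c_\sim$. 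What excludes such relations is homogeneity over sub-subspaces of $W$ in which a variable is \emph{repeated}, which your sketch never invokes. Concretely, suppose $(a_1,\dots,a_m)\,E^*\,(b_1,\dots,b_m)$. Comparing the sub-subspaces $(*_1,\dots,*_m,*_1,1,\dots)$ and $(*_1,\dots,*_m,a_1,1,\dots)$ of $W$ yields $c(b_1,\dots,b_m,b_1,1,\dots)=c(a_1,\dots,a_m,a_1,1,\dots)=c(b_1,\dots,b_m,a_1,1,\dots)$; feeding the two outer points into the sub-subspace $(b_1,*_1,\dots,*_m,1,\dots)$ then forces $(b_2,\dots,b_m,b_1)\,E^*\,(b_2,\dots,b_m,a_1)$, i.e.\ $a_1\sim b_1$, and the other coordinates follow by symmetry. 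You flag this step honestly as ``the technical heart'' and defer it to \cite{PV83}, so as written the proposal is an accurate outline of the known argument rather than a complete proof; supplying the repeated-variable comparison above, together with the (routine, but needed, and requiring $M\ge 2m$ or so) verification that the single-swap relation $\sim$ is independent of the coordinate tested and of the frozen values, closes it.
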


\subsection{Proof of Theorem~\ref{maxnorm1}}

In this section, we deduce \Cref{maxnorm1} from the Canonical Hales--Jewett theorem. 
Observe that our finite $M\subset \R^d$ is a subset of a Cartesian power $A^d$ for some finite $A \subset \R$. 
Indeed, let $A$ be the union of the projections of $M$ onto each of the $d$ coordinate axes. 
Hence, it is sufficient to prove \Cref{maxnorm1} only for $A^d$ playing the role of $M$.

Label the elements of $A$ by $a_1,\dots,a_s$ in ascending order. 
By adding between every two of its consecutive elements sufficiently many equally spaced intermediate points, 
we obtain a set $B=\{b_1,\dots,b_k\} \supset A$ such that
\begin{equation} \label{eq_small}
	\max_{1 \le i < k} (b_{i+1}-b_{i}) \le \min_{1\le j < s} (a_{j+1}-a_j),
\end{equation}
where the elements of $B$ are also labeled in ascending order, see \Cref{construction}.

Let $n$ be from the statement of \Cref{theorem_CHJ} applied to our $k$ and $m = d+\lceil d\log_2s\rceil$. 
In what follows, we prove that every coloring of $B^n$ contains either a monochromatic or a rainbow $\ell_\infty$-isometric copy of $A^d$, 
which implies that $\R_\infty^n \MR A^d$. 
Consider an arbitrary coloring of $B^n$. A natural bijection $\varphi$ between $[k]^n$ and $B^n$ defined by
\begin{equation*}
	\varphi(x_1,\dots,x_n) = (b_{x_1}, \dots, b_{x_n}) \mbox{ for all } (x_1,\dots,x_n) \in [k]^n
\end{equation*}
translates that coloring into a coloring of $[k]^n$. 
Now \Cref{theorem_CHJ} implies that the $m$-dimensional combinatorial subspace corresponding to some  
$\tau \in ([k]\cup\{*_1,\dots,*_m\})^n$ is colored according to $c_\sim$ for some equivalence relation $\sim$ on $[k]$.

If all the elements of $[k]$ are pairwise equivalent with respect to $\sim$, i.e., if the combinatorial space is monochromatic, 
then it is easy to complete the proof. Indeed, consider the subset of our combinatorial space where the last $m-d$ variables stay fixed, say, $*_{d+1}=\dots=*_m=1$, while the first $d$ variables range over the indices of the elements of $A$ inside $B$. 
It is not hard to see that the image of this subset under $\varphi$ 
is the desired monochromatic $\ell_\infty$-isometric copy of $A^d$ inside $B^n$. 
More precisely, let $1=i_1<\dots<i_s = k$ be a sequence such that $a_j=b_{i_j}$ for all $j \in [s]$, 
and put $I = \{i_1,\dots,i_s\} \subset [k]$, see \Cref{construction}. 
Let $\mathcal{I}_1$ be the set of all $s^d$ points of $[k]^m$ such that their first $d$ coordinates belong to $I$, 
while the remaining $m-d$ coordinates are equal to $1$. 
Note that $\tau(\mathcal{I}_1)\subset [k]^n$ is a subset of our combinatorial subspace, 
and thus it is monochromatic. Moreover, $\varphi(\tau((\mathcal{I}_1)) \subset B^n$ is an $\ell_\infty$-isometric copy of $A^d$, 
because the $\ell_\infty$-norm is independent of the arguments' multiplicities, i.e., the number of $*_r$-coordinates of $\tau$ for all $r \in [d]$, as desired. 

\begin{figure}[htp]
	\centering
	\includegraphics[width=10.5cm]{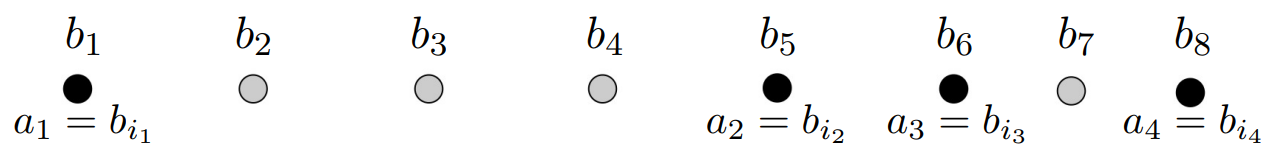}
	\caption{For $A=\{0,16,20,26\}$, we can take $B=\{0,4,8,12,16,20,23,26\}, \ I=\{1,5,6,8\}$.}
	\label{construction}
\end{figure}

Otherwise, there exists $1\le i' < k$ such that $i' \not\sim i'+1$. 
Let $\sigma$ be an arbitrary injection from $I^d$ to $\{i', i'+1\}^{m-d}$, where $I$ is from the previous paragraph. 
Such an injection exists because $|I^d| = s^d \le 2^{m-d} = |\{i', i'+1\}^{m-d}|$ by our choice of $m$. 
Consider $\mathcal{I}_2 = \{(\x, \sigma(\x)) : \x \in I^d\} \subset [k]^m$. Note that every two different 
elements of $\mathcal{I}_2$ have at least one non-equivalent coordinate, and thus the set $\tau(\mathcal{I}_2) \subset [k]^n$ is rainbow. 
Moreover, it is not hard to check that $\varphi(\tau(\mathcal{I}_2)) \subset B^n$ is an $\ell_\infty$-isometric copy of $A^d$. 
Indeed, in the previous paragraph, we have already seen this when the last $m-d$ variable coordinates were fixed. 
Now they are not fixed, but vary between $i'$ and $i'+1$. However, \eqref{eq_small} 
implies that $b_{i'+1}-b_{i'}$ is not larger than the smallest $\ell_\infty$-distance between different elements of $A^d$. 
Therefore, these additional variable coordinates do not change any of the 
$\ell_\infty$-distances. 
This completes the proof \Cref{maxnorm1}.

\section{Concluding remarks} \label{Sec4}

\noindent
\textbf{Canonical types.} 
For any finite set $A$, let $\R^n\MRR A$ denote the claim that every 
$r$-coloring of $\R^n$ contains a monochromatic or a rainbow copy of 
$A$. Clearly, if $\R^n \MRR A$ then also $\R^{n+1}\MRR A$ and $\R^n \smash{ \xrightarrow[\smash{\raisebox{0.7ex}{\tiny $\mathbf{MR}$}}]{r-1}} A$. Therefore, we can distinguish the following three types of sets, see \Cref{3cases}.

\vspace{-2mm}

\begin{enumerate} \setlength{\itemsep}{0pt}
	\item[1.] There is $r$ such that $\R^n\nMRR A$ for every $n$.
	\item[2.] There is $n$ such that $\R^n\MRR A$ for every $r$.
	\item[3.] None of the above, i.e., for every $r$ there is $n$ with  $\R^n\MRR A$, 
 and for every $n$ there is $r$ with $\R^n\nMRR A$.
\end{enumerate} 

\begin{figure}[htp]
\centering
\includegraphics[width=16cm]{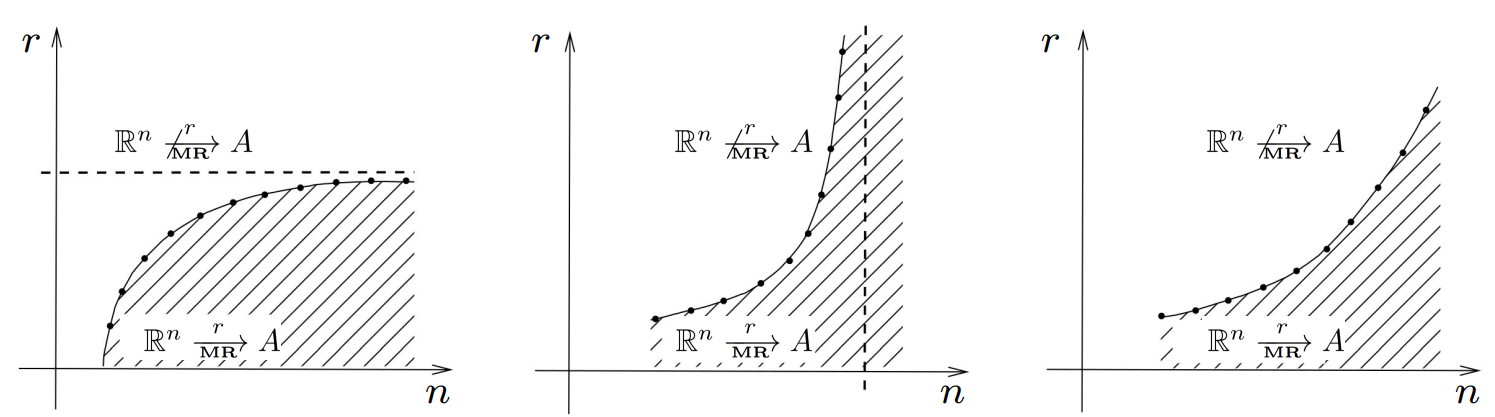}
\caption{All three possible types of finite sets.}
\label{3cases}
\end{figure}

It is clear that Ramsey sets cannot be of type 1, and it is natural to ask whether they are all of type 2. In fact, we cannot fully answer this question even if the Ramsey set $A$ is a rectangle or an obtuse triangle. Our \Cref{cube} implies that rectangles with sidelengths  $a$ and $b$ are of type 2, provided that $(b/a)^2$ is rational, but we could not determine a type of the remaining rectangles. As we mentioned in the Introduction, Cheng and  Xu~\cite{CX23} proved that the obtuse triangles that are not `too flat' are of type 2. The type of 'flat' obtuse triangles remains uncertain. We do not have an example of a Ramsey set of type 3.

For non-Ramsey sets we cannot rule out that all three types are possible. 
As it was observed in~\cite{CX23}, a classical spherical $3$-coloring 
of $\R^n$ without monochromatic copies of a $4$-term unit arithmetic progression $l_4$ from~\cite{EGMRSS1} 
cannot contain a rainbow copy either, so $l_4$ is of type 1. We are not aware of any non-Ramsey set of other types. A $3$-term unit arithmetic progression $l_3$ might be natural candidate for type 2 
(with $n$ being $2$ or $3$), since it was recently shown in \cite{CMU24} that $\R^2 \xrightarrow{2} l_3$. A similar question was raised in \cite[Section~4]{MP24}.

\vspace{2mm}

\noindent
\textbf{Tightness of \Cref{theorem_triangle}.}
A classical $2$-coloring of the plane with color-alternating strips of heights $\sqrt{3}/2$ does not contain monochromatic copies of a unit equilateral triangle $\triangle$. Hence, it is \textit{not} true that $\R^2 \MR \triangle$. Are there other triangles $T$ for which the dimension $3$ in \Cref{theorem_triangle} is also tight? It was conjectured in \cite[Conjecture~3]{EGMRSS3} that $\R^2 \xrightarrow{2} T$ for every non-equilateral triangle $T$. If true, this would be an evidence supporting that $\R^2 \MR T$ might hold for all non-equilateral triangles $T$, and thus our result is not tight. However, during the last 50 yeas, that conjecture was verified only for a few special families of triangles, see~\cite{NS23} and the references therein.

\vspace{2mm}

\noindent
\textbf{Quantitative bounds in \Cref{cube}.} What is the minimum $n=n(m)$ such that $\R^n \MR I^m$? A careful analysis of our proof leads to the upper bound $n(m) = \exp_2^{2m-1}\big(O(m)\big)$, where $\exp_2^{1}(x)=2^x$ and $\exp_2^{k+1}(x) = 2^{\exp_2^{k}(x)}$ for all $k>1$. However, our argument is very wasteful in a sense that all the auxiliary propositions are stated for the hypercubes, but in fact we are working only with their small subsets: Cartesian product of simplices. Taking this into account, we can improve the upper bound to $n(m) = \exp_2^{4}\big(O(m)\big)$, which is still a quadruple exponent. From the other direction, we know that $n(m) = \Omega(m \log m)$, since for smaller $n$, there exists a coloring of $\R^n$ with no monochromatic isometric copies of $I^m$ that uses less than $2^m$ colors, see~\cite{Pros18}.

The case $m=2$ is of a particular interest here. We can show that $K_{876}\square K_{123} \square K_{4} \square K_{2} \MR K_{2} \square K_{2}$, where $\square$ stands for the Cartesian product of these cliques. A straightforward translation of this purely graph theoretical statement into geometric language implies that $n(2) \le 875+122+3+1=1001$. We do not know what is the best upper bound on $n(2)$ that can be derived from~\cite{CX23}, but it might be of the same order. From the other direction, we do not even know if $n(2)>3$.

\vspace{2mm}

\noindent
\textbf{Max-norm Ramsey theory.} Though our \Cref{maxnorm1} is rather general, the quantitative bounds on the minimum dimension $n=n(M)$ such that  $\R_\infty^n \MR M$ it generates are probably extremely far from being tight. For instance, if $M$ is a rectangle with sidelengths $a<b$, then the number of auxiliary points on \Cref{construction}, as well as the resulting upper bound, will depend on the fraction $b/a$. We found an ad hoc argument for this case in the spirit of~\cite{CX23} which leads to the upper bound $O(b/a)$, i.e., this dependence is at most linear. Does there exist a universal constant $n$ such that $\R_\infty^n \MR R$ for every rectangle $R$? 

\vspace{2mm}

\noindent
\textbf{Manhattan Ramsey theory.} Let $\R_1^n$ be the $n$-dimensional space equipped with the Manhattan $\ell_1$-norm. It is not hard to show that for every $r$ and a finite $M \subset \R^d$, there exists $n$ such that $\R_1^n \xrightarrow{r} M$, see~\cite{KSZ23}. Our \Cref{Prop1} has the following canonical corollary: for every finite $M \subset \Q^d$, there exists $n$ such that $\R_1^n \MR M$. Indeed, it is sufficient to note that a path of $m$ pairwise orthogonal edges in the hypercube is an $\ell_1$-isometric copy of an arithmetic progression. Besides that, every `rational' $M$ can be embedded into a Cartesian power of this progression after a proper scaling. We wonder, if the same conclusion holds for `non-rational' sets $M$ as well, e.g.\ for all rectangles.

\vspace{3mm}

\noindent
{\bf \large Acknowledgments.}
We are very grateful to Endre Szab\'o and Bal\'azs Csik\'os for their help in the presentation of Lemma 1. Arsenii Sagdeev thanks Alexander Golovanov for  the helpful discussion of Lemma 1.

The authors were supported by ERC Advanced Grant `GeoScape' No.\ 882971. Panna Geh\'er was also supported by the Lend\"ulet Programme of the Hungarian Academy of Sciences -- grant number LP2021-1/2021 and by the Thematic Excellence Program TKP2021-NKTA-62 of the National Research, Development and Innovation Office. G\'eza T\'oth was also supported by the National Research, Development and Innovation Office, NKFIH, K-131529.

{\small }

\end{document}